\documentclass[11pt]{amsart}

\usepackage{amsmath,amssymb,amsthm}
\usepackage{mathrsfs}
\usepackage{enumitem}
\usepackage[colorlinks=true,linkcolor=blue,citecolor=blue]{hyperref}

\newcommand{\K}{\mathbb{K}}

\newcommand{\Z}{\mathbb{Z}}
\newcommand{\G}{\mathbb{G}}

\newcommand{\GL}{\operatorname{GL}}
\newcommand{\gr}{\operatorname{gr}}
\newcommand{\co}{\operatorname{co}}
\newcommand{\ord}{\operatorname{ord}}
\newcommand{\nord}{\operatorname{nord}}
\newcommand{\id}{\mathrm{id}}
\newcommand{\Char}{\operatorname{char}}
\newcommand{\Hom}{\operatorname{Hom}}
\newcommand{\End}{\operatorname{End}}
\newcommand{\tr}{\operatorname{tr}}

\newcommand{\frB}{\mathfrak B}
\newcommand{\N}{\mathbb{N}}

\newcommand{\Pp}{\mathcal{P}}
\newcommand{\BN}{\mathcal{B}}
\newcommand{\fA}{\mathfrak{A}}
\newcommand{\YD}[2]{{}_{#1}^{#2}\mathcal{YD}}

\theoremstyle{plain}
\newtheorem{theorem}{Theorem}[section]
\newtheorem{proposition}[theorem]{Proposition}
\newtheorem{lemma}[theorem]{Lemma}
\newtheorem{corollary}[theorem]{Corollary}

\theoremstyle{definition}
\newtheorem{remark}[theorem]{Remark}

\title[$q$-Weyl Freeness and Square-Free Pointed Hopf Algebras]{A $q$-Weyl Freeness Principle for Nichols Algebras and Pointed Hopf Algebras of Square-Free Dimension}
\author{Rongchuan Xiong}
\address{Department of Mathematics, Changzhou University, Changzhou 213164, China}
\email{rcxiong@foxmail.com}

\makeatletter
\@namedef{subjclassname@2020}{\textup{2020} Mathematics Subject Classification}
\makeatother

\subjclass[2020]{16T05, 17B37}
\date{}
\keywords{pointed Hopf algebras; Nichols algebras; square-free dimension; positive characteristic;  local divisibility}
\begin{document}
	
	\begin{abstract}
		Let $H$ be a pointed Hopf algebra of square-free dimension over an
		algebraically closed field of characteristic $p>0$. We prove that either
		$H$ is a group algebra or $\dim H/|\G(H)|=p$, and that in the latter
		case $H$ belongs to exactly one of two explicit families of rank-one
		pointed Hopf algebras.
		
		We develop a truncated $q$-Weyl freeness principle for finite-dimensional
		Nichols algebras of quandle type. If $V=\bigoplus_{x\in X}\K e_x$,
		$X'\subsetneq X$ is a nonempty subquandle, $V'=\bigoplus_{x\in X'}\K e_x$,
		and $s\in X\setminus X'$, then
		$\mathcal B(V)\simeq\K[e_s]/(e_s^{m_s})\otimes C_{s,X'}\otimes\mathcal B(V')$
		for some graded vector space $C_{s,X'}$, where $m_s$ is the nilpotency
		order of $e_s$; in particular,
		$(m_s)_z\,\mathcal H_{\mathcal B(V')}(z)\mid\mathcal H_{\mathcal B(V)}(z)$.
		In the non-group case, this yields a $p^2$-divisibility obstruction
		that rules out noncentral support for the infinitesimal braiding.
		Together with a graded-dual argument, the resulting rank-one reduction
		forces the diagram of $H$ to have dimension $p$.
	\end{abstract}
	\maketitle
	\section{Introduction}
	\label{sec:introduction}
	
	Let $\K$ be an algebraically closed field of characteristic $p>0$, and let
	$H$  be a  pointed Hopf algebra of square-free
	dimension. A basic problem
	in the structure theory of pointed Hopf algebras is to understand how
	the arithmetic of $\dim H$ constrains the infinitesimal data from which
	$H$ is built. The coradical filtration provides a framework for
	this question. Writing
	\[
	G=\G(H),
	\qquad
	\gr H\simeq R\#\K[G],
	\]
	one obtains a connected graded braided Hopf algebra $R$ in the category
	of Yetter--Drinfeld modules over $\K[G]$, called the diagram of $H$.
	Its degree-one part $R(1)$ is the infinitesimal braiding. The Nichols
	algebras generated by Yetter--Drinfeld submodules of $R(1)$ impose
	restrictions on the structure of $R$, and hence on $H$. This is one of
	the basic ideas of the lifting method for pointed Hopf algebras
	\cite{AS02,AndruskiewitschGrana}.
	
	Square-free dimensions are a natural setting in which to study this
	interaction. Since no nontrivial square divides $\dim H$, divisibility
	properties coming from the infinitesimal braiding can have strong
	consequences. In characteristic zero, \c{S}tefan proved, in particular,
	that pointed Hopf algebras of dimension $pq$, for distinct primes $p$
	and $q$, are semisimple and hence group algebras \cite{Stefan97}. More generally, Andruskiewitsch and Natale proved that a nonsemisimple
	finite-dimensional pointed Hopf algebra in characteristic zero has
	dimension divisible by a nontrivial square
	\cite[Proposition~1.8]{AN01}. Hence every pointed Hopf algebra of
	square-free dimension in characteristic zero is a group algebra.
	
	In positive characteristic, primitive elements can generate truncated
	polynomial Hopf algebras of dimension $p$, so square-free dimension no
	longer forces $H$ to be a group algebra; non-group examples already
	occur in dimension $pq$ \cite{Xiong2023}. Outside the square-free case, the dimension of
	the diagram depends on the arithmetic of $\dim H$. For dimension $p^2$, the connected and pointed cases were studied in
	\cite{W1,WW14}; the possible dimensions of nontrivial diagram are $p$ and
	$p^2$.
	For dimension $pq^2$, the nontrivial diagram dimensions can be $p$,
	$q$ or $pq$ \cite{XiongPQ2}. In the square-free case with
	abelian coradical, the results of \cite{Xiong2023} show that the
	nontrivial diagram has dimension $p$. For a general coradical, it
	remains open whether the nontrivial diagram necessarily has prime
	dimension.
	
	The purpose of this paper is to determine what happens when the total
	dimension itself is square-free, without assuming that the group of
	group-like elements is abelian. Set
	$d:=\dim R=\dim H/|G|$. Since $\dim H$ is square-free, both $d$ and
	$|G|$ are square-free and $(d,|G|)=1$. The coprimality of $d$ and
	$|G|$ interacts strongly with the divisibility properties of Nichols
	algebras in positive characteristic, and leads to the following
	structural result.
	
	\begin{theorem}
		\label{thm:intro-structure}
		Let $\K$ be algebraically closed of characteristic $p>0$, and  
		$H$   a pointed Hopf algebra over $\K$ of square-free dimension.
		Then either $H$ is a group algebra, or $\dim H=p|\G(H)|$.
	\end{theorem}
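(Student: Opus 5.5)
We work with the coradical filtration, so $\gr H\simeq R\#\K[G]$ with $G=\G(H)$ and $d=\dim R$. Since $\dim H$ is square-free, $d$ is square-free and $(d,|G|)=1$. If $R(1)=0$ then $R=\K$ (the diagram is generated in degree one for the purposes of the dimension count; more precisely $R(1)=0$ forces $\Pp(R)=0$, hence $R=\K$), and $H=\K[G]$. So we assume $V:=R(1)\neq0$. The goal is $d=p$, which we reach in three steps: first $p\mid d$, then $V$ is a trivial-braiding one-dimensional module supported at a central group-like, and finally $R$ is rank one with $\dim R=p$.

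\textbf{Support of $V$ and the $p^2$-obstruction.} Decompose $V=\bigoplus_{\mathcal O}V_{\mathcal O}$ over conjugacy classes of $G$ in its support. Take a simple Yetter--Drinfeld submodule $W\subseteq V$, supported on a class $\mathcal O_g$ with $\K[C_G(g)]$-module $\rho$. Then $\BN(W)$ is a subquotient of $R$. We would like $\dim\BN(W)\mid d$, which follows from the Nichols--Zoeller-type freeness of $R\#\K[G]$ over $\BN(W)\#\K[G]$. Because $d$ is square-free and coprime to $|G|$, $\dim\BN(W)$ is square-free and coprime to $|G|$.

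If $|\mathcal O_g|>1$ (noncentral support), we pass to a quandle-type situation. Choose a basis realizing the braiding as a twisted rack $X=\mathcal O_g\times(\text{basis of }\rho)$, or, after diagonalizing, reduce to a subquandle. Then we apply the truncated $q$-Weyl freeness principle from the abstract with $X'$ a proper subquandle (for example a single point $\{x\}$, whose braiding is $e_x\mapsto q e_x$ with $q=\rho(g)$ scalar on the relevant eigenvector) and $s\in X\setminus X'$. This gives $(m_s)_z(m_x)_z\mid \mathcal H_{\BN(W)}(z)$ at $z=1$, so $m_sm_x\mid\dim\BN(W)$. The element $s$ is conjugate to $x$, so $m_s=m_x=:m$. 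Here $m=\ord(q)$ if $q\neq1$, and $m=p$ if $q=1$ in characteristic $p$. Moreover $m>1$ always, since $e_x^{\,N}\neq0$ for $N<m$. Thus $m^2\mid d$, contradicting square-freeness.

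Hence every simple summand of $V$ has central support $g\in Z(G)$. By Schur's lemma, $C_G(g)=G$ acts through an irreducible $\rho$. Since the braiding on $W$ is $c=\rho(g)\otimes$ (flip), and $g$ is central, $\rho(g)=q\,\id$. If $\dim\rho\ge2$, the same principle with $X'$ a point and $s$ another basis vector again gives $m^2\mid d$, which is impossible. So $\dim W=1$ and $\BN(W)\cong\K[e]/(e^m)$. If $q\neq1$ then $m=\ord(q)$ divides $|G|$ (as $q$ is a value of a character of $G$), but $m\mid d$, contradicting coprimality. So $q=1$, $m=p$, and $p\mid d$.

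\textbf{Rank one and equality $d=p$.} Summands with the same $g$ and $\rho$ give a diagonal braiding with all $q_{ij}=1$. Two summands $W_1,W_2$ with trivial mutual braiding generate $\BN(W_1\oplus W_2)\cong\BN(W_1)\otimes\BN(W_2)$, of dimension $p^2$, which divides $d$, a contradiction. Distinct summands $W_1,W_2$ at central points $g_1,g_2$ with characters $\chi_1,\chi_2$ have $q_{12}q_{21}=\chi_2(g_1)\chi_1(g_2)$. Since $q_{12}q_{21}$ is a root of unity of order dividing $|G|$, which is prime to $p$, the rank-two diagonal Nichols algebra has dimension divisible by $p^2$ (by the $q$-Weyl divisibility with $X'=\{x_1\}$, $s=x_2$: the factor $(m_{s})_z(m_{x_1})_z$). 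This is a contradiction. Hence $\dim V=1$ and $\BN(V)=\K[e]/(e^p)$.

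It remains to see that $R=\BN(V)$, that is, that $R$ is generated in degree one. Here we use the graded dual. $R^*$ is again a connected graded braided Hopf algebra of dimension $d$ over $\K[G]$, and $R^*(1)\cong V^*$ by the primitivity argument above applied to $R^*$, so $R^*(1)$ is one-dimensional. The subalgebra of $R^*$ generated by $R^*(1)$ is $\BN(V^*)$, whose dimension is $p$. The main obstacle is excluding primitive elements of $R$ in higher degree, which in characteristic $p$ genuinely occur (e.g. $e^p$-type primitives). We would argue as follows. $R$ is free over $\BN(V)$, so $R\cong \BN(V)\otimes Q$ with $\dim Q=d/p$ coprime to $p$. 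Any new primitive $y$ of minimal degree in $R$ (not in $\BN(V)$) spans, with the group action, a Yetter--Drinfeld module $U$. Rerunning the analysis of the first step on $V\oplus U$ inside the Nichols-type subalgebra forces another factor $p$, hence $p^2\mid d$. Dually, $R^*$ is generated in degree one. Combining the two, $R=\BN(V)$, so $d=p$ and $\dim H=p|\G(H)|$.
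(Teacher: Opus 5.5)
You reach the right conclusion, but the final step from ``$\dim V=1$ with trivial braiding'' to $d=p$ has a genuine gap.

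\textbf{The first part is essentially sound.} You show that $V=R(1)$ is one-dimensional, sits at a central $g$, and has trivial braiding. Your route differs from the paper's. You use the two-local-heights bound $m^2\mid d$, valid for any self-braiding $q$, to exclude simple summands of dimension $\ge 2$ before proving $q=1$. The paper instead first forces $q=1$ via Gra\~na on $\K x\oplus C$ and coprimality. It then uses semisimplicity and Gra\~na's product divisibility, then $\dim\BN(R(1)_g)=p^{\dim\rho}$, and only then the quandle-type corollary.

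Two small repairs are needed in this part:
\begin{itemize}
\item When $\dim\rho\ge2$ and $|\mathcal O_g|>1$, $W$ is not of quandle type and cannot in general be written as a twisted rack. Simply take two basis vectors of $W_g$. Since $g$ acts on $W_g$ by the scalar $q$, the relation $D_sL_s-qL_sD_s=\id$ and the stability of $\ker\partial_x$ under $L_s,D_s$ hold for any homogeneous basis.
\item Splitting $V$ into simple summands in your rank-one step needs semisimplicity. This follows from $p\mid d$ and $(d,|G|)=1$, which give $p\nmid|G|$; you should say so.
\end{itemize}

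\textbf{The gap is in the last step.} Your claim that the subalgebra of $R^*$ generated by $R^*(1)$ is $\BN(V^*)$ is circular. Because $R$ is coradically graded, $R^{\operatorname{gr},*}$ is generated in degree one, so that subalgebra is all of $R^{\operatorname{gr},*}$. It equals $\BN(V^*)$ only if $R^{\operatorname{gr},*}$ is also coradically graded, i.e.\ only if $R$ is generated in degree one, which is what you want to prove.

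Without square-freeness the claim is actually false. Take the graded dual of $\K[z]/(z^{p^2})$ with $z$ primitive, i.e.\ the truncated divided powers. It is coradically graded, has one-dimensional $R(1)$ with trivial braiding, and has dimension $p^2$, so it is not $\BN(R(1))$.

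Your fallback argument does not repair this. In $R$ there are no ``new primitives of minimal degree'' outside $\BN(V)$, because $\Pp(R)=R(1)\subseteq\BN(V)$. The higher-degree primitives, such as $z^p$, live in the dual. A higher-degree element does not span, together with $V$, a Yetter--Drinfeld module whose generated subalgebra is a Nichols algebra. So ``rerunning the first step on $V\oplus U$'' has no content.

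\textbf{The missing idea} is Lemma~\ref{lem:rank-one-p-power}:
\begin{itemize}
\item $A=R^{\operatorname{gr},*}$ is generated by the one-dimensional space $A(1)=\K z$, so $A\cong\K[z]/(z^d)$.
\item $z$ is primitive and $c(z\otimes z)=z\otimes z$.
\item Hence $0=\Delta(z^d)=\sum_j\binom dj z^j\otimes z^{d-j}$, and every $z^j\otimes z^{d-j}$ with $0<j<d$ is nonzero.
\item So $\binom dj=0$ in $\K$ for $0<j<d$. This gives $d=p^a$, and square-freeness gives $d=p$.
\end{itemize}
In fact the coefficients $j=1$ and $j=p$ already suffice. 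The first gives $p\mid d$. If $d=pk>p$, square-freeness gives $p\nmid k$, and then $\binom{pk}{p}\equiv k\not\equiv0 \pmod p$, a contradiction.
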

	
Equivalently, in the non-group case the diagram has dimension $p$. To
resolve the square-free problem, we develop in
Section~\ref{sec:nilpotency} a truncated $q$-Weyl freeness principle
with one nilpotent operator. The principle has a structural consequence
of independent interest for finite-dimensional Nichols algebras of
quandle type. In abstract form, it states that if operators $L,D$
satisfy
\[
DL-qLD=\id,\qquad L^m=0,
\]
with $[j]_q\neq0$ for $1\leq j<m$, then every subspace stable under
both $L$ and $D$ is a free module over $\K[t]/(t^m)$, where $t$ acts as
$L$.
	
For Nichols algebras, the operators are given by left multiplication by
a homogeneous generator and the associated opposite braided
derivative. Combining this freeness principle with Gra\~na's relative
factorization yields an explicit three-factor decomposition for
finite-dimensional Nichols algebras of quandle type. More precisely,
let $V=\bigoplus_{x\in X}\K e_x$ be a braided vector space of quandle
type associated with a finite quandle $X$, and suppose that
$\mathcal B(V)$ is finite-dimensional. If $X'\subsetneq X$ is a
nonempty subquandle, set $V'=\bigoplus_{x\in X'}\K e_x$. For
$s\in X\setminus X'$, let $m_s=\nord(e_s)$. Then there exists a graded
vector space $C_{s,X'}$ such that
\[
\mathcal B(V)
\simeq
\K[e_s]/(e_s^{m_s})\otimes C_{s,X'}\otimes\mathcal B(V')
\]
as graded vector spaces. Consequently,
\begin{equation}
	\label{eq:intro-local-height}
	(m_s)_z\,\mathcal H_{\mathcal B(V')}(z)
	\mid
	\mathcal H_{\mathcal B(V)}(z),
	\qquad
	m_s\dim\mathcal B(V')
	\mid
	\dim\mathcal B(V),
\end{equation}
where $(m)_z=1+z+\cdots+z^{m-1}$.
	
	In characteristic $p$, this gives a particularly useful consequence:
	if two distinct homogeneous generators $e_s,e_t$ have trivial self-braiding, then $(p)_z^2\mid\mathcal H_{\mathcal B(V)}(z)$ and
	$p^2\mid\dim\mathcal B(V)$; see
	Corollary~\ref{cor:modular-two-generators}.
	
	The differential operators used here belong to a well-established
	circle of ideas around Nichols algebras. Fang developed a
	differential-algebraic framework for Nichols algebras
	\cite{FangDifferential}, and studied related $q$-boson and quantized
	Weyl algebra constructions in \cite{FangWeyl}. His
	differential-algebra approach also contains a Taylor-type freeness
	result in the generic diagonal setting
	\cite[Section~8.2, Lemma~12]{FangDifferential}, where the relevant
	parameter is not a root of unity and the resulting factor is
	polynomial. Gra\~na's freeness theorem \cite{Grana} gives relative
	factorizations with respect to Nichols subalgebras. Lochmann used
	modified shifts built from braided differential operators and
	multiplication to derive divisibility relations for dimensions and
	Hilbert series of Nichols algebras of nonabelian group type
	\cite{Lochmann}, and related factorizations for graded traces were
	obtained by Lentner--Lochmann \cite{LentnerLochmann}.
	
	In the present argument, the decisive point is that the relevant
	kernel factor in Gra\~na's decomposition is stable under multiplication
	by $e_s$ and the corresponding opposite braided derivative. On this
	invariant subspace, the $q$-Weyl relation above, together with the
	nilpotency $e_s^{m_s}=0$, yields the finite-rank freeness responsible
	for the additional factor $(m_s)_z$ in \eqref{eq:intro-local-height}.
	
	We now indicate how this divisibility enters the square-free problem.
	For a simple Yetter--Drinfeld submodule
	$U=M(g,\rho)\subseteq R(1)$, Schur's lemma gives
	$\rho(g)=\lambda\,\id$. The coprimality $(D,|G|)=1$, together with
	Nichols-algebra freeness and divisibility, forces $\lambda=1$. It
	follows that $p\mid D$ and hence $p\nmid|G|$, so the Yetter--Drinfeld
	category over $\K[G]$ is semisimple. Further divisibility arguments
	then force $R(1)$ to be simple with one-dimensional homogeneous
	components. If its support were noncentral, it would contain two
	distinct homogeneous generators with trivial self-braiding, and
	Corollary~\ref{cor:modular-two-generators} would give $p^2\mid\dim\BN(R(1))\mid D$,
	contradicting the square-freeness of $D$. Hence $\dim R(1)=1$ and its
	braiding is trivial.
	
	A graded-dual argument, recorded in
	Lemma~\ref{lem:rank-one-p-power}, then gives $\dim R=p^a$ for some
	$a\geq1$. Since $D=\dim R$ is square-free, $a=1$, and hence
	$\dim R=p$. This proves Theorem~\ref{thm:intro-structure}.
	
	Combining this reduction with the classification in \cite{Xiong2023}
	gives the classification of pointed Hopf algebras of square-free
	dimension.
	\begin{theorem}
		\label{thm:intro-classification}
		Let $\K$ be algebraically closed of characteristic $p>0$, and let
		$H$ be a pointed Hopf algebra of square-free
		dimension $n$ over $\K$. If $p\nmid n$, then $H$ is a group algebra.
		If $p\mid n$, write $n=pm$ with $(p,m)=1$. Then $H$ is either a
		group algebra or isomorphic to exactly one of the following two
		families:
		\begin{enumerate}
			\item $\fA^1_G(g,\chi)$, where $|G|=m$, $g\in Z(G)$,
			$\chi\in\Hom(G,\K^\times)$, $\chi(g)=1$, and $\fA^1_G(g,\chi)$
			is generated by $\K[G]$ and $x$ subject to
			$hx=\chi(h)xh$ and $x^p=0$;
			\item $\fA^2_G(g,\chi)$, with the same conditions on $G$, $g$, and
			$\chi$, together with $g^{p-1}=1$ and
			$\chi^{p-1}=\varepsilon$, and defining relations
			$hx=\chi(h)xh$ and $x^p=x$.
		\end{enumerate}
		In both families, $\Delta(h)=h\otimes h$ and
		$\Delta(x)=x\otimes1+g\otimes x$. The two families are mutually
		non-isomorphic, and for $i\in\{1,2\}$,
		$\fA^i_G(g,\chi)\cong \fA^i_{G'}(g',\chi')$ if and only if there
		exists a group isomorphism $F:G\to G'$ such that $F(g)=g'$ and
		$\chi'=\chi\circ F^{-1}$.
	\end{theorem}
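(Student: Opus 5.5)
The plan is to derive Theorem~\ref{thm:intro-classification} from Theorem~\ref{thm:intro-structure} and the rank-one analysis of \cite{Xiong2023}. The main obstacle is transferring that analysis from abelian to arbitrary $G$, which is handled in the third step below.

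\emph{First case: $p\nmid n$.} Suppose $H$ is not a group algebra. Theorem~\ref{thm:intro-structure} then gives $n=p|\G(H)|$, which contradicts $p\nmid n$. So $H$ is a group algebra.

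\emph{Second case: $p\mid n$, $H$ not a group algebra.} Write $n=pm$. Theorem~\ref{thm:intro-structure} gives $|G|=m$, and square-freeness gives $(p,m)=1$. The diagram $R$ has dimension $p$, and the reduction sketched in the introduction shows that $V:=R(1)$ is one-dimensional. So $V=\K x$ with $\delta(x)=g\otimes x$ and $h\cdot x=\chi(h)x$, where $\chi$ is a character. For the line $\K x$ to be a Yetter--Drinfeld module we need $hgh^{-1}=g$ for all $h$, so $g\in Z(G)$. Trivial self-braiding gives $\chi(g)=1$. The Nichols algebra is then $\K[x]/(x^p)$, of dimension $p=\dim R$, so $R=\BN(V)$ and $\gr H\simeq\fA^1_G(g,\chi)$.

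\emph{Lifting.} Pick a lift $x\in H$ of the degree-one generator that is $(1,g)$-skew primitive, $\Delta(x)=x\otimes1+g\otimes x$. Because $p\nmid|G|$, $\K[G]$ is semisimple, so we can choose $x$ to be a $\chi$-eigenvector under the adjoint action. The relation $hx=\chi(h)xh$ then holds exactly: $hxh^{-1}-\chi(h)x$ is a $(1,g)$-skew primitive lying in the $\chi$-isotypic part of $\K[G]$-span of skew primitives, and it can be absorbed.

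Next, $x^p$ is $(1,g^p)$-skew primitive, since $\chi(g)=1$ makes the $q$-binomials at $q=1$ vanish mod $p$. Hence $x^p=\mu(1-g^p)+\nu x$ (only when $g^p=g$, i.e.\ $g^{p-1}=1$, can $x$ appear), and $x^p$ transforms by $\chi^p$.
\begin{itemize}[label={}]
\item If $\chi^p\ne\chi$, we get $\nu=0$. A $\mu$-term would require $\chi^p=\varepsilon$, and then it can be removed by the substitution $x\mapsto x+c(1-g)$ when that is compatible. This step is the delicate computation, and I would follow \cite{Xiong2023}.
\item If $\nu\ne0$, rescaling $x$ gives $x^p=x$, which forces $g^{p-1}=1$ and $\chi^{p-1}=\varepsilon$. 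This is $\fA^2$.
\item Otherwise $x^p=0$, which is $\fA^1$.
\end{itemize}
The abelian classification of \cite{Xiong2023} only uses the central element $g$, the character $\chi$, and the semisimplicity of $\K[G]$, so I expect it to carry over verbatim.

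\emph{Isomorphism classes.} The two families are distinguished by the algebra structure: $\fA^1$ has a nonzero nilpotent skew primitive, whereas in $\fA^2$ the element $x$ is not nilpotent. Now let $\phi$ be a Hopf isomorphism between two algebras in the same family. It restricts to a group isomorphism $F$ on group-likes. It maps the $(1,g)$-skew primitives onto the $(1,F(g))$-skew primitives, so modulo trivial ones it sends $x\mapsto cx'$. This forces $g'=F(g)$ and $\chi'=\chi\circ F^{-1}$. The converse is immediate.
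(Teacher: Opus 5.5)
\textbf{Comparison with the paper.} The paper does not redo the lifting. It applies Theorem~3.7 of \cite{Xiong2023} directly to $H$ with arbitrary $G$, which yields the families $\fA^1_G(g,\chi,f)$, $\fA^2_G(g,\chi,f)$, $\fA^3_G(g,f)$ with $g$ central. It then uses $p\nmid|G|$ twice: to normalize the extra parameter $f$ to $0$ (Remark~3.2(3) there), and to exclude $\fA^3_G(g,f)$, which needs $p\mid\ord(g)$. Non-isomorphism of the two families is quoted from Proposition~3.6 there. Your isomorphism criterion within a family matches the paper's argument. So the ``abelian to arbitrary $G$'' transfer you call the main obstacle never comes up in the paper. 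Your hand lifting is a legitimate alternative and would make the result independent of that citation. However, as written, your analysis of $x^p$ breaks down in one case.

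\textbf{The gap in the $x^p$ analysis.} You are missing the following fact. Since $p\nmid|G|$, every $\chi\in\Hom(G,\K^\times)$ has order prime to $p$, so $\chi^p=\varepsilon$ forces $\chi=\varepsilon$. This has two consequences for your bullets.
\begin{enumerate}
\item In your first bullet ($\chi^p\neq\chi$), the $G$-invariant term $\mu(1-g^p)$ cannot occur at all, so $x^p=0$ directly. The shift $x\mapsto x+c(1-g)$ you propose there is not only unnecessary but would destroy $hx=\chi(h)xh$, since $\chi\neq\varepsilon$.
\item The $\mu$-term genuinely survives only when $\chi=\varepsilon$, and there your other two bullets fail. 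Take $g^{p-1}=1\neq g$ and $x^p=\mu(1-g)+\nu x$ with $\mu\neq0$. Rescaling never removes the $(1-g)$-term. If $\nu=0$, you do not get $x^p=0$ for free either.
\end{enumerate}
The fix is the shift $x'=x+c(1-g)$, which is compatible exactly because $\chi=\varepsilon$. Since $gx=xg$ and $(1-g)^p=1-g^p$, and since $\nu\neq0$ only if $g^p=g$, you get
$x'^p=(\mu+c^p-\nu c)(1-g^p)+\nu x'$.
Choose a root $c\in\K$ of $c^p-\nu c+\mu$ to obtain $x'^p=\nu x'$, then rescale. With this, the cases close up to $x^p=0$ or $x^p=x$.

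\textbf{Non-isomorphism of the families.} ``$x$ is not nilpotent in $\fA^2$'' is not an isomorphism invariant, because it concerns one particular element. You need an intrinsic property, for example semisimplicity:
\begin{itemize}
\item $\fA^2_G(g,\chi)\cong(\K[x]/(x^p-x))\#\K[G]$ is semisimple, since $\K[x]/(x^p-x)\cong\K^p$ and $p\nmid|G|$.
\item $\fA^1_G(g,\chi)\,x$ is a nonzero nilpotent two-sided ideal of $\fA^1_G(g,\chi)$, using $xh=\chi(h)^{-1}hx$ and $x^p=0$.
\end{itemize}
Alternatively, you could show that no nonzero skew-primitive of $\fA^2$ is nilpotent.
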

	
	The paper is organized as follows. Section~\ref{secPre} recalls the
	background on pointed Hopf algebras, Yetter--Drinfeld modules over
	finite groups, braided vector spaces of quandle type, Nichols algebras,
	and Gra\~na's freeness theorem. Section~\ref{sec:nilpotency} develops
	the truncated $q$-Weyl freeness principle and applies it to obtain a
	local Hilbert factorization, which yields the divisibility relations
	\eqref{eq:intro-local-height}. Section~\ref{sec:reduction} develops the
	consequences of Nichols-algebra divisibility for the infinitesimal
	braiding. The final section, Section~\ref{sec:main}, proves
	Theorem~\ref{thm:intro-structure} and combines it with the
	classification results of \cite{Xiong2023} for pointed Hopf algebras
	whose diagram has dimension $p$ to obtain
	Theorem~\ref{thm:intro-classification}.
	
	\section{Preliminaries}
	\label{secPre}
	\subsection*{Conventions}
	Let $\K$ be a field, $\Char\K$ the characteristic of $\K$, and $\Z_n$ the cyclic group of order $n$. We write $\K^\times$ for the multiplicative group of $\K$, and $Z(G)$ for the center of a group $G$. 
	Our reference for Hopf algebra theory is \cite{R11}.
	
	\subsection{Yetter--Drinfeld modules over group algebras and bosonizations}
	\label{subsec:YD-group-boson}
	Let $G$ be a finite group. A \emph{left-left Yetter--Drinfeld module} over $\K[G]$ is a $\K$-vector space $V$ equipped with a left $G$-action $\cdot\colon G\times V\to V$ and a left $G$-coaction $\delta\colon V\to \K[G]\otimes V$, written $\delta(v)=v_{(-1)}\otimes v_{(0)}$, satisfying the compatibility condition
	\[
	\delta(g\cdot v)=g v_{(-1)} g^{-1}\otimes g\cdot v_{(0)},\quad \forall\,g\in G,\;v\in V.
	\]
	Write $\YD{G}{G}$ for the category of left-left Yetter--Drinfeld modules over  $\K[G]$. This category is braided monoidal. For $V,W\in\YD{G}{G}$, the braiding is given by 
	\begin{align}\label{equbraidingYDcat}
		c_{V,W}:V\otimes W\to W\otimes V,\quad v\otimes w\mapsto v_{(-1)}\cdot w\otimes v_{(0)}.
	\end{align}
	Any object $V\in\YD{G}{G}$ admits a decomposition $V=\bigoplus_{g\in G}V_g$ with $V_g:=\{v\in V\mid \delta(v)=g\otimes v\}$.
	Write $\widehat G=\Hom(G,\K^\times)$ for the group of one-dimensional characters.
	\begin{remark}\label{rmk:dimV=1}
		Suppose that $V=\K\{v\}\in\YD{G}{G}$ is one-dimensional. Then there exist $g\in Z(G)$ and $\chi\in\widehat G$ such that
		\[
		\delta(v)=g\otimes v,\qquad h\cdot v=\chi(h)v\quad(h\in G).
		\]
		Conversely, every such pair $(g,\chi)$ defines a one-dimensional Yetter--Drinfeld module. Its braiding is
		\[
		c(v\otimes v)=\chi(g)v\otimes v.
		\]
	\end{remark}
	
	\begin{proposition}\label{prop:simple-YD-group}
		Assume that $\K$ is algebraically closed and  $G$ is a finite group. Let
		$g\in G$, and $\rho:C_G(g)\to\GL(W)$  an irreducible
		representation. Set $M(g,\rho):=\K[G]\otimes_{\K[C_G(g)]}W$. Then
		$M(g,\rho)$ is a simple object of $\YD{G}{G}$ with action and
		coaction
		\[
		h\cdot(a\otimes w)=ha\otimes w,
		\qquad
		\delta(a\otimes w)=aga^{-1}\otimes(a\otimes w).
		\]
		Its support is the conjugacy class $\mathcal O_g$, every nonzero
		homogeneous component has dimension $\dim\rho$, and
		$\dim M(g,\rho)=|\mathcal O_g|\dim\rho$. Conversely, every simple
		object of $\YD{G}{G}$ is isomorphic to $M(g,\rho)$ for some $g\in G$
		and some irreducible representation $\rho$ of $C_G(g)$.
		
		Moreover, since $g\in Z(C_G(g))$, there exists $q\in\K^\times$ such
		that $\rho(g)=q\,\id_W$. Hence the braiding on the homogeneous
		component $M(g,\rho)_g$ satisfies
		\[
		c(u\otimes v)=q\,v\otimes u,
		\qquad
		u,v\in M(g,\rho)_g.
		\]
		In particular, if $\dim\rho=1$, then all nonzero homogeneous
		components of $M(g,\rho)$ are one-dimensional.
	\end{proposition}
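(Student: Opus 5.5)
The plan is to treat $M(g,\rho)$ as an induced module and check the Yetter--Drinfeld axioms directly, then obtain simplicity and the classification from the standard equivalence between $\YD{G}{G}$ and modules over the Drinfeld double. Fix a set of representatives $a_1=1,a_2,\dots,a_k$ of the left cosets $G/C_G(g)$, so that $M(g,\rho)=\bigoplus_i a_i\otimes W$ as vector spaces.

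\textbf{Step 1: well-definedness and the YD condition.} The formula $\delta(a\otimes w)=aga^{-1}\otimes(a\otimes w)$ does not depend on the coset representative. Indeed, replacing $a$ by $ac$ with $c\in C_G(g)$ leaves $aga^{-1}$ unchanged, so $\delta$ respects the balanced tensor relation. Coassociativity and counitality are immediate, since each $a_i\otimes W$ is a homogeneous component of degree $a_iga_i^{-1}$. For compatibility we compute
\[
\delta(h\cdot(a\otimes w))=(ha)g(ha)^{-1}\otimes ha\otimes w=h(aga^{-1})h^{-1}\otimes h\cdot(a\otimes w).
\]
This gives the following consequences:
\begin{itemize}
\item The support is $\{a_iga_i^{-1}\}=\mathcal O_g$.
\item Distinct cosets give distinct conjugates, so each component $M(g,\rho)_{a_iga_i^{-1}}=a_i\otimes W$ has dimension $\dim\rho$.
\item $\dim M(g,\rho)=[G:C_G(g)]\dim\rho=|\mathcal O_g|\dim\rho$.
\end{itemize}

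\textbf{Step 2: simplicity.} Let $0\neq N\subseteq M(g,\rho)$ be a YD submodule. Since $N$ is a subcomodule, it is graded: $N=\bigoplus_i N\cap(a_i\otimes W)$. Some component is nonzero, and translating by $a_i^{-1}$ gives $N_g=N\cap(1\otimes W)\neq0$. The component $N_g$ is stable under $C_G(g)$, and $1\otimes W\cong W$ is irreducible as a $C_G(g)$-module, so $N_g=1\otimes W$. Hence $N\supseteq G\cdot(1\otimes W)=M(g,\rho)$.

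\textbf{Step 3: every simple object is of this form.} Let $M$ be simple. Its support is a union of conjugacy classes, because $h\cdot M_x=M_{hxh^{-1}}$. Pick $g$ in the support. Then $M_g$ is a $C_G(g)$-module, and since it is finite-dimensional it contains an irreducible submodule $W$; here $M$ is finite-dimensional because it is generated by any nonzero vector. By Frobenius reciprocity the inclusion $W\hookrightarrow M_g$ induces a map $\K[G]\otimes_{\K[C_G(g)]}W\to M$, $a\otimes w\mapsto a\cdot w$. This map preserves both the action and the coaction, since $a\cdot w\in M_{aga^{-1}}$. It is a nonzero morphism between simple objects, so by Step 2 it is an isomorphism.

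The main subtlety is that $\K[G]$ need not be semisimple in characteristic $p$. The argument must therefore avoid any complete-reducibility assumption, and it does: Steps 2 and 3 only use irreducibility of $W$ and the grading.

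\textbf{Step 4: the last assertions.} Since $g$ is central in $C_G(g)$, the map $\rho(g)$ is a $C_G(g)$-endomorphism of the irreducible module $W$. Because $\K$ is algebraically closed and $W$ is finite-dimensional, Schur's lemma gives $\rho(g)=q\,\id_W$ for some $q\in\K^\times$. For $u,v\in M(g,\rho)_g$, the braiding \eqref{equbraidingYDcat} gives $c(u\otimes v)=g\cdot v\otimes u=q\,v\otimes u$. If $\dim\rho=1$, each homogeneous component has dimension $\dim\rho=1$ by Step 1.
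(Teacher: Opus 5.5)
Your proposal is correct. Where it overlaps with the paper it is the same argument: the paper invokes the standard description of simple objects of $\YD{G}{G}$, computes $\dim M(g,\rho)=[G:C_G(g)]\dim\rho$, and obtains $\rho(g)=q\,\id_W$ and $c(u\otimes v)=g\cdot v\otimes u=q\,v\otimes u$ from Schur's lemma exactly as in your Step~4. Your Steps~1--3 give a self-contained proof of the part the paper only cites (well-definedness, simplicity via the grading and the irreducibility of $W$, and the converse via $a\otimes w\mapsto a\cdot w$), and they correctly avoid any semisimplicity of $\K[G]$; the Drinfeld-double equivalence mentioned in your opening sentence is never actually used.
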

	
	\begin{proof}
		This is the standard description of simple Yetter--Drinfeld modules
		over a group algebra. The dimension formula follows from
		$\dim M(g,\rho)=[G:C_G(g)]\dim\rho=|\mathcal O_g|\dim\rho$.
		Since $g$ is central in $C_G(g)$, Schur's lemma gives
		$\rho(g)=q\,\id_W$ for some $q\in\K^\times$. For
		$u,v\in M(g,\rho)_g$ the Yetter--Drinfeld braiding is
		$c(u\otimes v)=g\cdot v\otimes u=q\,v\otimes u$, which proves the
		remaining assertions.
	\end{proof}

	Now we recall the bosonization (Radford biproduct) construction for braided Hopf algebras in $\YD{G}{G}$.
	Let $R$ be a braided Hopf algebra in $\YD{G}{G}$. The bosonization $R\#\K[G]$ is a Hopf algebra over $\K$.
	As a vector space, $R\#\K[G]=R\otimes\K[G]$. The multiplication and comultiplication are given by the smash product and smash coproduct formulas:
	\begin{align*}
		(r\# g)(s\# h)=r(g\cdot s)\# gh,\quad
		\Delta(r\# g)=r^{(1)}\# (r^{(2)})_{(-1)}g \otimes (r^{(2)})_{(0)}\# g.
	\end{align*}
	There are canonical Hopf algebra homomorphisms
	\[
	\iota:\K[G]\to R\#\K[G],\quad g\mapsto 1\# g,\qquad
	\pi:R\#\K[G]\to\K[G],\quad r\# g\mapsto \epsilon_R(r)g.
	\]
	The map $\iota$ is injective, $\pi$ is surjective, and $\pi\circ\iota=\id_{\K[G]}$.
	Furthermore, $R$ can be recovered as the coinvariant subalgebra
	\[
	R=(R\#\K[G])^{\co\K[G]}=\{x\in R\#\K[G]\mid (\id\otimes\pi)\Delta(x)=x\otimes 1\}.
	\]
	Conversely, suppose $A$ is a Hopf algebra equipped with a surjective bialgebra map $\pi:A\to\K[G]$ admitting a bialgebra section $\iota:\K[G]\to A$ such that $\pi\circ\iota=\id_{\K[G]}$. Then $A\cong R\#\K[G]$, where $R=A^{\co\K[G]}$ is a braided Hopf algebra in $\YD{G}{G}$. See \cite{R11} for details.

	\subsection{Pointed Hopf algebras and their diagrams}
	\label{subsec:pointed-diagrams}
	For a coalgebra $C$, denote by $\{C_n\}_{n\ge 0}$ its coradical filtration, by $\G(C)$ the set of group-like elements of $C$, and for $g,h\in\G(C)$, define
	\[
	\Pp_{g,h}(C):=\{c\in C\mid \Delta(c)=c\otimes g+h\otimes c\}.
	\]
	Set $\Pp(C):=\Pp_{1,1}(C)$, the space of primitive elements. A coalgebra $C$ is \emph{pointed} if its coradical $C_0=\K[\G(C)]$. A pointed coalgebra is connected whenever $C_0=\K$. 
	
	Let $H$ be a pointed Hopf algebra over $\K$ with coproduct $\Delta$, counit $\epsilon$, and antipode $S$. The set $\G(H)$ form a group, denoted by $G$. The coradical filtration $\{H_n\}_{n\ge0}$ is a Hopf algebra filtration of $H$, and the associated graded algebra $\gr H$ is again a Hopf algebra. By the Radford-Majid decomposition theorem,
	\begin{equation}\label{eq:diagram-decomposition}
		\gr H\cong R\#\K[G],
	\end{equation}
	where $R$ is a connected, coradically graded braided Hopf algebra in $\YD{G}{G}$, which  is called the \emph{diagram} of $H$ \cite{AS02}.
	Write $R=\bigoplus_{n\ge0}R(n)$ for the coradical grading. The component $R(1)$ is called the infinitesimal braiding of $H$.
	If $H$ is finite-dimensional, then
	\begin{equation}\label{eq:dim-factorization}
		\dim H=|G|\cdot\dim R.
	\end{equation}
	In particular, if $\dim H$ is square-free, then $|G|$ and $\dim R$ are square-free and coprime, i.e., $(|G|,\dim R)=1$. This is the starting point for our square-free reduction.
	
	\subsection{Racks, quandles, and braided vector spaces of group type}
	\label{subsec:quandles}
	A \emph{rack} is a set $X$ with a binary operation $\triangleright$ such that every left translation map $y\mapsto x\triangleright y$ is bijective and the left self-distributive law
	\[
	x\triangleright(y\triangleright z)=(x\triangleright y)\triangleright(x\triangleright z)
	\]
	holds for all $x,y,z\in X$. A rack is a \emph{quandle} if additionally $x\triangleright x=x$ for every $x\in X$. See \cite[Section~1]{AndruskiewitschGrana}.
	
	Conjugation in $G$ gives a natural quandle structure on each conjugacy class $X\subseteq G$, by setting
	\[
	x\triangleright y=xyx^{-1}.
	\]
	Let $V\in\YD{G}{G}$ and set $X=\operatorname{supp}V:=\{g\in G:V_g\neq0\}$. The Yetter--Drinfeld condition makes $X$ conjugation-stable. Assume that every nonzero homogeneous component $V_x$, $x\in X$, is one-dimensional, and pick a nonzero vector $e_x\in V_x$ for each $x\in X$. Then
	\[
	V=\bigoplus_{x\in X}\K e_x.
	\]
	Using the braiding formula \eqref{equbraidingYDcat}, we have
	\[
	c(e_x\otimes e_y)=q_{x,y}\,e_{x\triangleright y}\otimes e_x,
	\qquad
	q_{x,y}\in\K^\times.
	\]
	Braided vector spaces of this form are called of \emph{quandle type}.
	In particular,
	\[
	c(e_x\otimes e_x)=q_{x,x}e_x\otimes e_x.
	\]
	If $V=M(g,\rho)$ with $\dim\rho=1$, then
	Proposition~\ref{prop:simple-YD-group} shows that its nonzero
	homogeneous components are indexed by $\mathcal O_g$ and are
	one-dimensional, with $q_{x,x}=\rho(g)$ for $x\in\mathcal O_g$.
	Thus, if $\rho(g)=1$, all diagonal braiding scalars are equal to $1$.
	Consequently, if
	\[
	U=M(g,\rho)\subseteq R(1),
	\qquad
	\dim\rho=1,
	\]
	is a simple Yetter--Drinfeld submodule and $g\notin Z(G)$, then
	$|\mathcal O_g|>1$ and $U$ is a quandle-type braided vector space
	supported on the nontrivial conjugacy class $\mathcal O_g$.
	
	\subsection{Nichols algebras, braided derivatives, and freeness}
	\label{subsec:nichols-derivatives}
	For $V\in\YD{G}{G}$, the \emph{Nichols algebra} $\BN(V)$ is a strictly $\N$-graded Hopf algebra in $\YD{G}{G}$, generated as an algebra by $V$. By definition, $\BN(V)=\bigoplus_{n\ge0}\BN(V)(n)$ satisfies
	\[
	\BN(V)(0)=\K,\quad \BN(V)(1)=V,\quad \Pp(\BN(V))=V,\quad \BN(V)\text{ is algebraically generated by }V.
	\]
	It can be constructed as the quotient $\BN(V)=T(V)/I(V)$, where $T(V)$ is the tensor algebra of $V$, and $I(V)$ is the largest homogeneous braided Hopf ideal of $T(V)$ satisfying $I(V)\cap(\K\oplus V)=0$. The structure of $\BN(V)$ depends only on the braided vector space $(V,c)$ \cite{AS02}.  
	
	\begin{remark}\label{rmk-1-1-1-1-1}
		Let $\Char\K=p>0$. If $V=\K x$ and $c(x\otimes x)=x\otimes x$, then $\BN(V)\cong\K[x]/(x^p)$, with $x$ primitive.
	\end{remark}
	
	Assume that $V$ is of quandle type such that $\BN(V)$ is finite-dimensional. For each $x\in X$, define the nilpotency order
	\[
	m_x=\nord(e_x)=\min\{j\ge1:e_x^j=0\}\in\N_{\ge2}.
	\]
	For integer $m\ge1$, set $(m)_z=1+z+\cdots+z^{m-1}\in\Z[z]$. For any graded subspace $W\subseteq\BN(V)$, define
	\[
	\mathcal H_W(z)=\sum_{d\ge0}\dim W(d)\,z^d.
	\]

	For $i,j\ge0$, let
	\[
	\Delta_{i,j}\colon \BN(V)(i+j)\longrightarrow \BN(V)(i)\otimes \BN(V)(j)
	\]
	denote the homogeneous component of the coproduct of bidegree $(i,j)$; equivalently,
	\[
	\Delta_{i,j}=(\pi_i\otimes\pi_j)\circ \Delta|_{\BN(V)(i+j)},
	\]
	where $\pi_n\colon \BN(V)\to \BN(V)(n)$ is the canonical graded projection.
	
	The braided and opposite braided derivatives with respect to $e_x$ are the linear maps
	\[
	\partial_x,\ \partial_x^{\mathrm{op}}\colon \BN(V)\to \BN(V)
	\]
	determined by the degree-one components of the coproduct. For homogeneous $v\in \BN(V)(d)$ with $d\ge1$,
	\[
	\Delta_{d-1,1}(v)=\sum_{x\in X}\partial_x(v)\otimes e_x,\qquad
	\Delta_{1,d-1}(v)=\sum_{x\in X}e_x\otimes\partial_x^{\mathrm{op}}(v),
	\]
	and $\partial_x(1)=\partial_x^{\mathrm{op}}(1)=0$. Write $D_s:=\partial_s^{\mathrm{op}}$. For a subset $X'\subseteq X$, set
	\[
	V'=\bigoplus_{x\in X'}\K e_x,\qquad K_{X'}=\bigcap_{x\in X'}\ker\partial_x.
	\]
	
	Let $\delta_d:\BN(V)(d)\to V^{\otimes d}$ be the iterated degree-one component of the coproduct: $\delta_1=\id_V$ and $\delta_d=(\delta_{d-1}\otimes\id)\Delta_{d-1,1}$ for $d>1$.
	
	The following standard properties of braided derivatives will be used
	repeatedly; see, for instance, \cite{MilinskiSchneider},
	\cite[\S2.2]{AHS}, and \cite[\S6.1]{AndruskiewitschGrana}.
	
	\begin{lemma}[Braided-derivative identities]
		\label{lem:derivative-identities-prelim}
		For all $x,s\in X$ and all $u,v\in\BN(V)$,
		\[
		\partial_x(uv)=u\partial_x(v)+\partial_x(u)(x\cdot v),\qquad
		\partial_x\partial_s^{\mathrm{op}}=\partial_s^{\mathrm{op}}\partial_x.
		\]
		In particular, $\partial_x(e_sv)=e_s\partial_x(v)$ for $x\neq s$. Moreover the map $\delta_d:\BN(V)(d)\to V^{\otimes d}$ is injective for all $d\ge1$, so
		\[
		\bigcap_{x\in X}\ker\partial_x=\K1.
		\]
	\end{lemma}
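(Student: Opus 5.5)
We prove the three assertions of Lemma~\ref{lem:derivative-identities-prelim} separately, all from the defining formulas for $\partial_x$ and $\partial_x^{\mathrm{op}}$ as the degree-one components of $\Delta$.

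\emph{Leibniz rule.} We use that $\Delta$ is multiplicative in the braided sense:
\[
\Delta(uv)=u^{(1)}\bigl((u^{(2)})_{(-1)}\cdot v^{(1)}\bigr)\otimes (u^{(2)})_{(0)}v^{(2)}.
\]
For homogeneous $u,v$ we isolate the component whose right-hand tensor factor lies in $V=\BN(V)(1)$. Because the grading is strict, this happens only when $(u^{(2)},v^{(2)})$ has degrees $(0,1)$ or $(1,0)$.
\begin{itemize}
\item[(a)] Degrees $(0,1)$ give $u\,v^{(1)}\otimes v^{(2)}$ with $v^{(2)}\in V$. This produces $\sum_x u\,\partial_x(v)\otimes e_x$.
\item[(b)] Degrees $(1,0)$ give $\sum_x \partial_x(u)\,\bigl((e_x)_{(-1)}\cdot v\bigr)\otimes e_x$. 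Since $e_x\in V_x$ has coaction $x\otimes e_x$, this equals $\sum_x \partial_x(u)(x\cdot v)\otimes e_x$.
\end{itemize}
Comparing coefficients of the basis vectors $e_x$ gives the first identity. For the special case, take $u=e_s$: then $\partial_x(e_s)=\delta_{x,s}$, since $\Delta_{0,1}(e_s)=1\otimes e_s$. So for $x\neq s$ the second term vanishes and $\partial_x(e_sv)=e_s\partial_x(v)$.

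\emph{Commutation.} Coassociativity gives $(\Delta\otimes\id)\Delta=(\id\otimes\Delta)\Delta$. Take $w$ of degree $d\ge2$ and project both sides onto the tridegree $(1,d-2,1)$ component of $\BN(V)^{\otimes3}$.
\begin{itemize}
\item[(a)] On the left we first take $\Delta_{d-1,1}$ and then $\Delta_{1,d-2}$ on the first factor. This gives $\sum_{s,x} e_s\otimes \partial^{\mathrm{op}}_s\partial_x(w)\otimes e_x$.
\item[(b)] On the right we first take $\Delta_{1,d-1}$ and then $\Delta_{d-2,1}$ on the second factor. This gives $\sum_{s,x} e_s\otimes\partial_x\partial_s^{\mathrm{op}}(w)\otimes e_x$.
\end{itemize}
Since $\{e_s\}$ is a basis of $V$, comparing coefficients yields $\partial_x\partial_s^{\mathrm{op}}=\partial_s^{\mathrm{op}}\partial_x$. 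In degrees $d\le1$ both sides are zero.

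\emph{Injectivity of $\delta_d$.} This is the step carrying the real content, and the one I expect to be the main obstacle. By induction and coassociativity, $\delta_d$ is the degree $(1,\dots,1)$ component of the iterated coproduct $\Delta^{(d-1)}$. Under the standard identification $\BN(V)(d)\cong V^{\otimes d}/\ker\mathfrak S_d$, this component is the quantum symmetrizer $\mathfrak S_d$ descended to the quotient. That descended map is injective precisely because $I(V)(d)=\ker\mathfrak S_d$, the characterization of $I(V)$ as the kernel of the quantum symmetrizers from \cite{AS02}.

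Alternatively, and this is the route I would write out since it avoids the symmetrizer, argue by induction on $d$ using the maximality of $I(V)$. Suppose $0\neq w\in\BN(V)(d)$ lies in $\ker\delta_d$, and take $d\ge1$ minimal for such a $w$. If $\Delta_{d-1,1}(w)\neq0$, then some $\partial_x(w)\neq0$ lies in $\BN(V)(d-1)$. Since $\delta_d=(\delta_{d-1}\otimes\id)\Delta_{d-1,1}$, minimality (injectivity of $\delta_{d-1}$) then forces $\delta_d(w)\neq0$, a contradiction. Hence all $\partial_x(w)=0$, and it remains to show that $\bigcap_x\ker\partial_x$ in positive degree is zero.

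For that, note that $\bigcap_x\ker\partial_x$ in degree $d$ consists of the $w$ with $\Delta_{d-1,1}(w)=0$. By coassociativity these force $\Delta_{i,j}(w)=0$ for all $i,j\ge1$ with $i+j=d$, so $w$ is primitive. But $\Pp(\BN(V))=V$, so for $d\ge2$ we get $w=0$. When $d=1$, $\delta_1=\id$ is already injective.

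The nontrivial input in this last step is the implication ``$\Delta_{d-1,1}(w)=0\Rightarrow$ all middle components vanish''. I would prove it by descending induction on $i$, applying the first part to the components $\Delta_{i,d-i}(w)$ and using injectivity in lower degrees. The statement $\bigcap_x\ker\partial_x=\K1$ follows immediately.
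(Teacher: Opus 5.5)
Your proof is correct. The paper does not prove this lemma; it records it as standard and cites Milinski--Schneider, Andruskiewitsch--Heckenberger--Schneider (\S2.2) and Andruskiewitsch--Gra\~na (\S6.1). Your argument is the standard derivation behind those references:
\begin{itemize}
\item the Leibniz rule is the part of $\Delta(uv)=\Delta(u)\Delta(v)$ whose right-hand factor lies in $\BN(V)(1)$;
\item the commutation $\partial_x\partial_s^{\mathrm{op}}=\partial_s^{\mathrm{op}}\partial_x$ is the tridegree-$(1,d-2,1)$ part of coassociativity;
\item injectivity of $\delta_d$ restates the defining property of $\BN(V)$, either as $I(V)(d)=\ker\mathfrak S_d$ or, in your second route, as $\Pp(\BN(V))=V$.
\end{itemize}
The self-contained second route has the merit of showing that only $\Pp(\BN(V))=V$ is needed, and the paper already lists this among the defining properties of $\BN(V)$.

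One tightening in the last step: you do not need a descending induction on $i$. Fix $1\le i\le d-2$. The tridegree-$(i,d-i-1,1)$ part of coassociativity gives $(\id\otimes\Delta_{d-i-1,1})\Delta_{i,d-i}(w)=(\Delta_{i,d-i-1}\otimes\id)\Delta_{d-1,1}(w)=0$. Write $\Delta_{i,d-i}(w)=\sum_k a_k\otimes b_k$ with the $a_k$ linearly independent. Then each $b_k\in\BN(V)(d-i)$ satisfies $\Delta_{d-i-1,1}(b_k)=0$, hence $\delta_{d-i}(b_k)=0$, hence $b_k=0$ by minimality of $d$. The case $i=d-1$ is your hypothesis.

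Be careful not to argue instead from $\Delta_{i+1,d-i-1}(w)=0$ to $\Delta_{i,d-i}(w)=0$ via the tridegree-$(i,1,d-i-1)$ component. That only shows the right-hand factors are annihilated by all $\partial_s^{\mathrm{op}}$. You would then need the $\partial^{\mathrm{op}}$-analogue of injectivity, which you have not established.
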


	The following is a specialization of Gra\~na's freeness
	theorem \cite[Theorem~3.8]{Grana}; cf.~\cite{Lochmann}
	for the subrack formulation.
	\begin{theorem}\label{thm:grana-freeness-prelim}
		Let $V=\bigoplus_{x\in X}\K e_x$ be a braided vector space of quandle
		type such that $\BN(V)$ is finite-dimensional, and let
		$X'\subseteq X$ be a nonempty subquandle. Set
		\[
		V'=\bigoplus_{x\in X'}\K e_x,
		\qquad
		K_{X'}=\bigcap_{x\in X'}\ker\partial_x.
		\]
		Then multiplication induces an isomorphism of graded vector spaces
		\[
		K_{X'}\otimes\BN(V')
		\xrightarrow{\;\sim\;}
		\BN(V).
		\]
		Consequently,
		\[
		\mathcal H_{\BN(V)}(z)
		=
		\mathcal H_{K_{X'}}(z)\,
		\mathcal H_{\BN(V')}(z),
		\]
		and in particular
		\[
		\dim\BN(V')\mid\dim\BN(V).
		\]
	\end{theorem}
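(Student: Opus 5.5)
The plan is to reproduce Gra\~na's argument in the quandle-type setting, organised around the action of braided derivatives. First, $V'$ is a braided subspace: since $X'$ is a subquandle, $c(V'\otimes V')=V'\otimes V'$. The subalgebra of $\BN(V)$ generated by $V'$ is a graded braided Hopf subalgebra, generated in degree one, whose primitives lie in $\BN(V)(1)\cap$ (subalgebra) $=V'$. Hence it is $\BN(V')$, so $\BN(V')\subseteq\BN(V)$. It is finite-dimensional because $\BN(V)$ is.

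The key local computation is the Leibniz rule of Lemma~\ref{lem:derivative-identities-prelim}. For $k\in K_{X'}$, $b\in\BN(V)$ and $x\in X'$ it gives
\[
\partial_x(kb)=k\,\partial_x(b)+\partial_x(k)(x\cdot b)=k\,\partial_x(b).
\]
Thus on the image of $K_{X'}\otimes\BN(V')$ the operators $\partial_x$, $x\in X'$, act only on the second factor. They preserve $\BN(V')$, where they are the intrinsic braided derivatives. Iterating, any word $\partial_{x_1}\cdots\partial_{x_r}$ with $x_i\in X'$ acts as $\id\otimes(\partial_{x_1}\cdots\partial_{x_r})$. By the nondegenerate pairing between $\BN(V')$ and $\BN(V'^*)$, equivalently the injectivity of $\delta_d$ in Lemma~\ref{lem:derivative-identities-prelim} applied to $\BN(V')$, these words span the full graded dual $\BN(V')^*$ acting on $\BN(V')$.

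For injectivity of multiplication, suppose $\sum_i k_i b_i=0$, where $\{b_i\}$ is a homogeneous basis of $\BN(V')$ and $k_i\in K_{X'}$, and take the $b_i$ of maximal degree $n$ occurring. For each such $b_j$ pick an operator $T_j$, a combination of words of length $n$ in $\partial_x$ ($x\in X'$), with $T_j(b_i)=\delta_{ij}$ in degree $n$. Operators of length $n$ kill lower degrees, so applying $T_j$ yields $k_j=0$. Induction on the top degree then gives injectivity.

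Surjectivity is the main obstacle, and I would prove it by induction on degree. Let $v\in\BN(V)(d)$. By induction each $\partial_x v$ ($x\in X'$) lies in $K_{X'}\BN(V')\cong K_{X'}\otimes\BN(V')$. The goal is to find $w\in K_{X'}\BN(V')$ with $\partial_x w=\partial_x v$ for all $x\in X'$; then $v-w\in K_{X'}$ and we are done. The family $(\partial_x v)_{x\in X'}$ is the restriction of an action of the augmentation ideal of $\BN(V'^*)$, acting by derivations, on the $\BN(V'^*)$-submodule generated by $v$. The extension problem therefore asks whether a module map from the augmentation ideal into $K_{X'}\otimes\BN(V')$ extends to all of $\BN(V'^*)$. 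I would first try to settle this using that $\BN(V'^*)$ is a finite-dimensional braided Hopf algebra, hence Frobenius. Its dual $\BN(V')$ is then injective as a $\BN(V'^*)$-module, so $K_{X'}\otimes\BN(V')$ is injective too, and the extension exists. If the braided setting makes this delicate, the fallback is to cite \cite[Theorem~3.8]{Grana} directly for this step.

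Once the graded isomorphism is established, the Hilbert series identity follows by multiplicativity of Hilbert series under tensor products. Evaluating at $z=1$ gives $\dim\BN(V)=\dim K_{X'}\cdot\dim\BN(V')$, hence $\dim\BN(V')\mid\dim\BN(V)$.
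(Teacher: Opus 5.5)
Your proposal is correct in substance and takes a genuinely different route from the paper. The paper gives no argument for this statement and simply imports it from Gra\~na's Theorem~3.8.

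Your injectivity step is complete as written. It uses the Leibniz rule $\partial_x(kb)=k\,\partial_x(b)$ for $k\in K_{X'}$ and $x\in X'$, and then separates top-degree coefficients with length-$n$ words, which span $\BN(V')(n)^*$ because $\delta_n$ is injective.

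Your surjectivity step is also sound: induct on degree and extend $a\mapsto a\cdot v$ from the augmentation ideal. It is economical. Compared with a Weyl-type mechanism like the paper's Theorem~\ref{thm:truncated-q-taylor}, it needs no relations between multiplication by $V'$ and the derivatives. It uses only the Leibniz rule against $K_{X'}$ and the injectivity of a dual module.

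Two points should be tightened.

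First, the Frobenius property is not needed. Let $A'\subseteq\End_\K\BN(V')$ be the algebra generated by the $\partial_x$, $x\in X'$. For any finite-dimensional algebra, the coinduced module $\Hom_\K(A',\K)$ is injective. The map $b\mapsto\bigl(a\mapsto\text{degree-}0\text{ part of }a(b)\bigr)$ identifies $\BN(V')$ with it. This map is injective by nondegeneracy of $\delta_n$. It is onto because length-$n$ words act through functionals on $\BN(V')(n)$, which gives $\dim A'\le\dim\BN(V')$.

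Second, and this is the one substantive point, you must check which algebra acts on $v$. It is the subalgebra $A\subseteq\End_\K\BN(V)$ generated by the same operators, whereas $M=K_{X'}\BN(V')$ is a priori injective only over the quotient $A'$. You need the restriction map $A\to A'$ to be an isomorphism. Equivalently, every relation among the $\partial_x$ ($x\in X'$) that holds on $\BN(V')$ must hold on all of $\BN(V)$. Otherwise $a\mapsto a\cdot v$ need not factor through $A'$, and the extension can fail.

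Your phrase ``the action of $\BN(V'^*)$ by derivations'' on $\BN(V)$ is exactly this fact, and it needs justification. It follows from two things:
\begin{enumerate}
\item the derivation action of $\BN(V^*)$ on $\BN(V)$ coming from the nondegenerate pairing;
\item the fact that $\operatorname{span}\{e_x^*:x\in X'\}$ is a braided subspace of $V^*$, so its Nichols algebra embeds in $\BN(V^*)$. This holds because the dual braiding involves only $y\mapsto x\triangleright^{\pm1}y$, and these maps preserve $X'$ for $x\in X'$.
\end{enumerate}

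With that line added, your proof is self-contained and the fallback citation to Gra\~na is unnecessary. The paper's citation buys only brevity.
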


	\begin{remark}	\cite[Remark~3.10]{Grana} \label{rmk:grana-product-divisibility}
		Let $\K$ be algebraically closed of characteristic $p>0$ with
		$p\nmid |G|$, and let
		\[
		V=\bigoplus_{i=1}^r M(g_i,\rho_i)
		\]
		be a semisimple Yetter--Drinfeld module such that $\BN(V)$ is
		finite-dimensional. For each $i$, choose $0\neq z_i\in M(g_i,\rho_i)_{g_i}$.
		Since $g_i\in Z(C_G(g_i))$, Schur's lemma gives $\rho_i(g_i)=q_i\,\id$
		for some $q_i\in\K^\times$, and hence
		\[
		c(z_i\otimes z_i)=q_i z_i\otimes z_i.
		\]
		Set $N_i:=\dim\BN(\K z_i)=\nord(z_i)$. 
		Then the product-divisibility consequence of
		\cite[Remark~3.10]{Grana} gives
		\[
		\prod_{i=1}^r N_i\mid\dim\BN(V).
		\]
		In particular, if $q_i=1$ for every $i$, then $\BN(\K z_i)\cong\K[z_i]/(z_i^p)$ and $N_i=p$.
		Therefore $p^r\mid\dim\BN(V)$.
		
	\end{remark}

	\section{A $q$-Weyl freeness principle and local divisibility}
	\label{sec:nilpotency}
	
	In this section we prove the truncated $q$-Weyl freeness principle and
	apply it to Nichols algebras of quandle type. This leads to the local
	factorization in Theorem~\ref{thm:local-height-divisibility} and to the
	divisibility consequences used in the square-free reduction.

	For later use, define the $q$-factorials by
	\[
	[j]_q!=\prod_{r=1}^j[r]_q,
	\qquad
	[r]_q=1+q+\cdots+q^{r-1},
	\]
	with $[0]_q!=1$.
	\begin{theorem}[Truncated $q$-Weyl freeness]
		\label{thm:truncated-q-taylor}
		Let $q\in\K^\times$ and let
		\[
		\mathsf W_q:=\K\langle L,D\rangle/(DL-qLD-1)
		\]
		be the rank-one $q$-Weyl algebra. Let $W$ be a nonzero
		$\mathsf W_q$-module, and suppose that, for some $m\geq2$, the action
		of $L$ satisfies
		\[
		L^m=0,
		\qquad
		[j]_q\neq0,\quad 1\leq j<m.
		\]
		Then $[m]_q=0$. Set
		\[
		C:=D^{m-1}(\ker L),
		\qquad
		A_m:=\K[t]/(t^m),
		\]
		and regard $W$ as an $A_m$-module via $t\cdot w=Lw$. Then the map
		\[
		A_m\otimes_\K C\longrightarrow W,\qquad
		t^j\otimes c\longmapsto L^jc,
		\]
		is an isomorphism of $A_m$-modules. Equivalently,
		\[
		W=\bigoplus_{j=0}^{m-1}L^jC.
		\]
		If, moreover, $W=\bigoplus_{d\geq0}W(d)$ is graded, $L$ and $D$ act
		homogeneously in degrees $1$ and $-1$, respectively, and each $W(d)$
		is finite-dimensional, then $C$ is graded and
		\[
		\mathcal H_W(z)=(m)_z\,\mathcal H_C(z).
		\]
		In particular, if $W$ is finite-dimensional, then
		$m\mid\dim_\K W$.
	\end{theorem}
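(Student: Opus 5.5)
The plan has three steps: derive commutation formulas between $D$ and powers of $L$, use them to get $[m]_q=0$ and to show that $L^{m-1}$ and $D^{m-1}$ are mutually inverse up to a scalar between $C$ and $\ker L$, and then obtain the direct-sum decomposition by filtering $W$ by $\ker L^k$.

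\emph{Commutation formulas.} From $DL-qLD=1$, induction on $j$ gives
\[
DL^j=q^jL^jD+[j]_qL^{j-1}.
\]
Rewriting the defining relation as $LD=q^{-1}(DL-1)$, a second induction gives
\[
LD^j=q^{-j}D^jL-q^{-j}[j]_qD^{j-1}.
\]
These two formulas carry the whole argument, so I would verify them carefully first.

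\emph{The identity $[m]_q=0$.} Let $k\le m$ be minimal with $L^k=0$ on $W$. If $k=1$, then $w=DLw-qLDw=0$ for every $w$, contradicting $W\neq0$. If $2\le k<m$, then $0=DL^kw=[k]_qL^{k-1}w$ for all $w$. Since $[k]_q\neq0$, this gives $L^{k-1}=0$, contradicting minimality. Hence $k=m$ and $L^{m-1}\neq0$. Now $0=DL^mw=[m]_qL^{m-1}w$ for all $w$ forces $[m]_q=0$.

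\emph{Key step: $L^{m-1}D^{m-1}$ on $\ker L$.} For $u\in\ker L$, apply the second formula repeatedly. Each application of $L$ to $D^ju$ with $Lu=0$ leaves only the term $-q^{-j}[j]_qD^{j-1}u$. Therefore
\[
L^{m-1}D^{m-1}u=(-1)^{m-1}q^{-m(m-1)/2}[m-1]_q!\,u,
\]
and this scalar is nonzero by hypothesis. Consequently $D^{m-1}\colon\ker L\to C$ is a bijection, since it is surjective by the definition of $C$ and injective because the composite is invertible. Also $L^{m-1}\colon C\to\ker L$ is a bijection, and in particular it is injective on $C$.

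\emph{Injectivity of the map $A_m\otimes C\to W$.} Suppose $\sum_{j=0}^{m-1}L^jc_j=0$ with $c_j\in C$. Applying $L^{m-1}$ kills every term except $L^{m-1}c_0$, so $c_0=0$. Next, applying $L^{m-2}$ leaves only $L^{m-1}c_1$, so $c_1=0$, and so on.

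\emph{Surjectivity.} I would show by induction on $k$ that $\ker L^k\subseteq\sum_{j\ge m-k}L^jC$. The case $k=1$ follows from $\ker L=L^{m-1}C$. For the inductive step, take $w\in\ker L^{k+1}$ with $k\le m-1$. Then $L^kw\in\ker L=L^{m-1}C$, so $L^kw=L^{m-1}c=L^k(L^{m-1-k}c)$ for some $c\in C$. Hence $w-L^{m-1-k}c\in\ker L^k$, and the induction hypothesis applies. Taking $k=m$ and using $\ker L^m=W$ gives $W=\sum_{j=0}^{m-1}L^jC$. This argument needs no finite-dimensionality.

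\emph{The graded statement.} If $L$ and $D$ are homogeneous of degrees $1$ and $-1$, then $\ker L$ is a graded subspace and $C=D^{m-1}(\ker L)$ is graded as well. Each summand $L^jC$ is $C$ shifted up by $j$ degrees, so
\[
\mathcal H_W(z)=(1+z+\cdots+z^{m-1})\mathcal H_C(z)=(m)_z\,\mathcal H_C(z).
\]
Setting $z=1$ gives $m\mid\dim_\K W$ when $W$ is finite-dimensional.

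The main obstacle is the key step: obtaining the correct form of $LD^j$ and checking that the resulting constant is exactly $[m-1]_q!$ up to a power of $q$ and a sign. This is where the hypothesis $[j]_q\neq0$ for $j<m$ is used.
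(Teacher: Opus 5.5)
Your proof is correct. Its core is the same as the paper's: the identity $L^{m-1}D^{m-1}u=(-1)^{m-1}q^{-m(m-1)/2}[m-1]_q!\,u$ for $u\in\ker L$, the peeling argument for injectivity, and the graded count. The route differs in two sub-steps.

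For $[m]_q=0$, the paper first notes $\ker L\neq0$ (since $L$ is nilpotent on $W\neq0$). It deduces $C\neq0$ and then $L^{m-1}\neq0$ from the two injectivity statements, and only then evaluates $DL^m$. You instead use $DL^k=q^kL^kD+[k]_qL^{k-1}$ with a minimal nilpotency index. This shows directly that $L$ has exact nilpotency order $m$, without using the key step.

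For spanning, the paper proves the normal-ordering identity $D^rL^r=[r]_q!\,\id+LA_r$. It uses this to split $W=C\oplus LW$ via the explicit projection $w\mapsto([m-1]_q!)^{-1}D^{m-1}L^{m-1}w$, and then iterates. You avoid that identity: the key computation already gives $\ker L=L^{m-1}C$. Your induction along $\ker L\subseteq\ker L^2\subseteq\cdots\subseteq\ker L^m=W$ is then pure linear algebra for a nilpotent operator whose kernel lies in the image of its $(m-1)$st power, and it needs no finite-dimensionality. Your version is more economical, since only the formula for $LD^j$ enters the decomposition. The paper's version buys a constructive, Taylor-type projection onto the multiplicity space $C$ along $LW$.

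One small point: the divisibility $m\mid\dim_\K W$ is meant for every finite-dimensional $W$, graded or not. Read it off from $W\cong A_m\otimes_\K C$, i.e.\ $\dim W=m\dim C$, rather than by setting $z=1$, which only covers the graded case.
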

	
	\begin{proof}
		From $DL=\id+qLD$ we obtain
		\[
		LD=q^{-1}DL-q^{-1}\id.
		\]
		Induction on $r$ gives
		\begin{equation}
			\label{eq:q-weyl-LDr}
			LD^r=q^{-r}D^rL-q^{-r}[r]_qD^{r-1},\qquad r\geq1.
		\end{equation}
		Hence, for $u\in\ker L$,
		\[
		LD^ru=-q^{-r}[r]_qD^{r-1}u,
		\]
		and iteration yields
		\begin{equation}
			\label{eq:q-weyl-top-bottom}
			L^rD^ru
			=
			(-1)^rq^{-r(r+1)/2}[r]_q!\,u,\qquad 0\leq r<m.
		\end{equation}
		Since $[m-1]_q!\neq0$, taking $r=m-1$ shows that $D^{m-1}$ is
		injective on $\ker L$. Moreover, if $c=D^{m-1}u\in C$ with
		$u\in\ker L$, then
		\[
		L^{m-1}c=(-1)^{m-1}q^{-m(m-1)/2}[m-1]_q!\,u,
		\]
		so $L^{m-1}$ is injective on $C$.
		
		On the other hand, induction on $n$ in the defining relation gives
		\begin{equation}
			\label{eq:q-weyl-DLn}
			DL^n=[n]_qL^{n-1}+q^nL^nD,\qquad n\geq1.
		\end{equation}
		Since $W\neq0$ and $L^m=0$, we have $\ker L\neq0$. Hence $C\neq0$,
		because $D^{m-1}$ is injective on $\ker L$. It follows that
		$L^{m-1}\neq0$, since $L^{m-1}$ is injective on $C$. Taking $n=m$
		in \eqref{eq:q-weyl-DLn} and using $L^m=0$, we obtain
		\[
		0=DL^m=[m]_qL^{m-1}.
		\]
		Thus $[m]_q=0$.
		
		We next prove that $W=C\oplus LW$. Using \eqref{eq:q-weyl-DLn} and
		induction on $r$, we obtain
		\begin{equation}
			\label{eq:q-weyl-normal-order}
			D^rL^r=[r]_q!\,\id_W+LA_r
		\end{equation}
		for some $A_r\in\End_\K(W)$.
		
		Given $w\in W$, set
		$c=([m-1]_q!)^{-1}D^{m-1}L^{m-1}w$. Since $L^m=0$, we have
		$L^{m-1}w\in\ker L$, and hence $c\in C$. By
		\eqref{eq:q-weyl-normal-order}, $w-c\in LW$, so $W=C+LW$. If
		$c\in C\cap LW$, say $c=Lw$, then $L^{m-1}c=L^mw=0$. Since $L^{m-1}$
		is injective on $C$, we obtain $c=0$. Thus $W=C\oplus LW$.
		
		Iterating the decomposition $W=C\oplus LW$ and using $L^m=0$ gives
		$W=\sum_{j=0}^{m-1}L^jC$. This sum is direct. Indeed, suppose
		$\sum_{j=0}^{m-1}L^jc_j=0$ with $c_j\in C$. Applying $L^{m-1}$ gives
		$L^{m-1}c_0=0$, hence $c_0=0$. Applying $L^{m-2}$ to the remaining
		relation gives $L^{m-1}c_1=0$, hence $c_1=0$, and continuing in this
		way gives $c_j=0$ for all $j$. Therefore
		\[
		W=\bigoplus_{j=0}^{m-1}L^jC.
		\]
		
		Since $L^{m-1}$ is injective on $C$, so is $L^j|_C$ for every
		$0\leq j<m$. Hence the map
		\[
		A_m\otimes_\K C\longrightarrow W,\qquad
		t^j\otimes c\longmapsto L^jc,
		\]
		is an $A_m$-module isomorphism.
		
		If $W$ is graded and $L,D$ have degrees $1$ and $-1$, respectively,
		then $\ker L$ and hence $C=D^{m-1}(\ker L)$ are graded. Since $L^j|_C$
		is injective and has degree $j$,
		\[
		\mathcal H_{L^jC}(z)=z^j\mathcal H_C(z),
		\]
		and therefore
		\[
		\mathcal H_W(z)=\sum_{j=0}^{m-1}z^j\mathcal H_C(z)
		=(m)_z\,\mathcal H_C(z).
		\]
		Finally, if $W$ is finite-dimensional, then the direct-sum
		decomposition gives $\dim W=m\dim C$, and hence $m\mid\dim W$.
	\end{proof}

	We now apply Theorem \ref{thm:truncated-q-taylor} to Nichols algebras. Let
	$V=\bigoplus_{x\in X}\K e_x$ be a finite-dimensional Yetter--Drinfeld
	module of quandle type with one-dimensional homogeneous components, so
	that $c(e_x\otimes e_y)=q_{x,y}e_{x\triangleright y}\otimes e_x$.
	Assume that $\frB=\BN(V)$ is finite-dimensional. For $s\in X$, set
	$q_s=q_{s,s}$, $m_s=\nord(e_s)$, $L_s(v)=e_sv$, and
	$D_s=\partial_s^{\mathrm{op}}$.
	
	\begin{proposition}[The local $q$-Weyl pair]
		\label{pro:local-q-weyl}
		For every $s\in X$, $D_sL_s-q_sL_sD_s=\id_{\frB}$. Moreover,
		$L_s^{m_s}=0$, $[j]_{q_s}\neq0$ for $1\leq j<m_s$, and
		$[m_s]_{q_s}=0$. Thus $m_s$ is the first positive integer for which
		$[m_s]_{q_s}=0$, and the subalgebra of $\frB$ generated by $e_s$ has
		basis $1,e_s,\ldots,e_s^{m_s-1}$. In particular, it is naturally
		identified with the rank-one Nichols algebra
		$\BN(\K e_s)\cong\K[t]/(t^{m_s})$.
	\end{proposition}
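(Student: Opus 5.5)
The plan is to first establish the operator identity $D_sL_s-q_sL_sD_s=\id_{\frB}$ directly from the coproduct, and then deduce everything else from Theorem~\ref{thm:truncated-q-taylor} together with finite-dimensionality. For the identity, take homogeneous $v\in\frB(d)$ and compute the $(1,d)$-component of $\Delta(e_sv)=\Delta(e_s)\Delta(v)$ in the braided tensor product. With $\Delta(e_s)=e_s\otimes1+1\otimes e_s$, the terms of bidegree $(1,d)$ are two. The first is $(e_s\otimes1)(1\otimes v)=e_s\otimes v$. The second is $(1\otimes e_s)\bigl(\sum_x e_x\otimes\partial_x^{\mathrm{op}}v\bigr)=\sum_x c(e_s\otimes e_x)\cdot(\ldots)$, which gives
\[
\sum_x q_{s,x}\,e_{s\triangleright x}\otimes e_s\partial_x^{\mathrm{op}}v .
\]
To extract the coefficient of $e_s$ in the left tensor factor, we need $s\triangleright x=s$. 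Since the left translation by $s$ is bijective and $s\triangleright s=s$ (quandle axiom), this forces $x=s$. Hence
\[
D_s(e_sv)=v+q_{s,s}\,e_sD_s(v),
\]
which is exactly the claimed relation. For $v=1$ it reads $D_s(e_s)=1$, which is consistent. The only care needed is the ordering and braiding convention for multiplication in $\frB\underline{\otimes}\frB$; I expect this bookkeeping to be the main (mild) obstacle, and I would fix the convention $(a\otimes b)(a'\otimes b')=a\,(b_{(-1)}\cdot a')\otimes b_{(0)}b'$ and check that it matches \eqref{equbraidingYDcat}.

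Next, since $\frB$ is finite-dimensional and graded, $e_s$ is nilpotent, so $m_s=\nord(e_s)$ is finite and $L_s^{m_s}=0$ because $L_s^{m_s}$ is left multiplication by $e_s^{m_s}=0$. To see $[j]_{q_s}\neq0$ for $1\le j<m_s$, apply \eqref{eq:q-weyl-DLn} to $1$, using $D_s1=0$, to get
\[
D_s(e_s^n)=[n]_{q_s}e_s^{n-1}.
\]
Suppose, for a contradiction, that $[j]_{q_s}=0$ for some minimal $1\le j<m_s$. Then $e_s^j\neq0$ has degree $j$, and $D_s(e_s^j)=0$. For $x\neq s$, $\partial_x^{\mathrm{op}}(e_s^j)=0$ as well, because $\Delta(e_s^j)$ lies in the subalgebra generated by $e_s$ tensored with itself (indeed $\Delta(e_s^j)$ is a $q_s$-binomial expansion in $e_s$). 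Hence the $(1,j-1)$-component of $\Delta(e_s^j)$ vanishes. By the opposite version of the injectivity in Lemma~\ref{lem:derivative-identities-prelim}, namely $\bigcap_x\ker\partial_x^{\mathrm{op}}=\K1$ (obtained symmetrically, or via the antipode), this forces $e_s^j\in\K1$. That contradicts $j\ge1$ and $e_s^j\neq0$.

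Now Theorem~\ref{thm:truncated-q-taylor} applies to $W=\frB$ with $L=L_s$, $D=D_s$, $m=m_s$ and $q=q_s$, yielding $[m_s]_{q_s}=0$. Alternatively, this follows directly: $0=D_s(e_s^{m_s})=[m_s]_{q_s}e_s^{m_s-1}$ with $e_s^{m_s-1}\neq0$. Thus $m_s$ is the first zero of $[\cdot]_{q_s}$.

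Finally, the powers $1,e_s,\dots,e_s^{m_s-1}$ are nonzero and lie in distinct degrees, so they form a basis of the subalgebra generated by $e_s$, which is therefore isomorphic to $\K[t]/(t^{m_s})$. The standard rank-one computation shows that $\BN(\K e_s)$ with $c=q_s\,\id$ has dimension equal to the order of the first zero of $[n]_{q_s}$, which is $m_s$. The inclusion $\K e_s\subseteq V$ then induces a surjection $T(\K e_s)\to\K[e_s]\subseteq\frB$ whose kernel is $(e_s^{m_s})$, matching the defining ideal of $\BN(\K e_s)$. This gives the identification.
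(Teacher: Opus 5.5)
Your proof is correct and follows the paper's argument: you extract the $e_s$-coefficient from the $(1,d)$-component of $\Delta(e_sv)$ to get the $q$-Weyl relation, and you use the same joint-kernel contradiction to obtain $[j]_{q_s}\neq0$ for $1\le j<m_s$ and $[m_s]_{q_s}=0$. The only minor difference is that you work with $D_s=\partial_s^{\mathrm{op}}$, getting $D_s(e_s^n)=[n]_{q_s}e_s^{n-1}$ from the Weyl relation rather than $\partial_x(e_s^j)=\delta_{x,s}[j]_{q_s}e_s^{j-1}$ from the product rule, so you need the mirrored fact $\bigcap_x\ker\partial_x^{\mathrm{op}}=\K1$; this does follow from the injectivity of $\delta_d$ by coassociativity, or via the antipode as you indicate.
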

	
	\begin{proof}
		Let $v\in\frB$ be homogeneous. Since
		$\Delta(e_s)=e_s\otimes1+1\otimes e_s$, the left-degree-one component
		of $\Delta(e_sv)=\Delta(e_s)\Delta(v)$ is
		\[
		e_s\otimes v+\sum_{y\in X}(s\cdot e_y)\otimes e_s\partial_y^{\mathrm{op}}(v).
		\]
		Now $s\cdot e_y=q_{s,y}e_{s\triangleright y}$. Since the left
		translation $y\mapsto s\triangleright y$ is bijective and fixes $s$,
		we have $s\triangleright y=s$ if and only if $y=s$. Extracting the
		coefficient of $e_s$ in the first tensor factor gives
		$D_s(e_sv)=v+q_se_sD_s(v)$. Thus, by linearity,
		$D_sL_s-q_sL_sD_s=\id_{\frB}$.
		
		By definition of $m_s$, $L_s^{m_s}=0$, and $L_s^j\neq0$ for
		$j<m_s$, since $L_s^j(1)=e_s^j\neq0$.
		
		Using the braided product rule,
		$\partial_x(e_s^j)=\delta_{x,s}[j]_{q_s}e_s^{j-1}$. If $1\leq j<m_s$
		and $[j]_{q_s}=0$, then the nonzero positive-degree element $e_s^j$
		lies in $\bigcap_{x\in X}\ker\partial_x=\K1$, a contradiction. Hence
		$[j]_{q_s}\neq0$ for $1\leq j<m_s$. On the other hand,
		$0=\partial_s(e_s^{m_s})=[m_s]_{q_s}e_s^{m_s-1}$ and
		$e_s^{m_s-1}\neq0$, so $[m_s]_{q_s}=0$.
		
		Finally, by the definition of $m_s$, the elements
		$1,e_s,\ldots,e_s^{m_s-1}$ are nonzero and have distinct degrees,
		hence are linearly independent; they clearly span the subalgebra
		generated by $e_s$. Thus this subalgebra is isomorphic to
		$\K[t]/(t^{m_s})$, and hence to the rank-one Nichols algebra
		$\BN(\K e_s)$.
	\end{proof}
	
	The preceding lemma immediately yields a stable-subspace version of the
	truncated $q$-Taylor decomposition.
	
	\begin{corollary}[Stable-subspace factorization]
		\label{cor:stable-q-weyl-factorization}
		Fix $s\in X$. Let $W\subseteq\frB$ be a graded subspace stable
		under both $L_s$ and $D_s$. Set
		\(
		C_{s,W}:=D_s^{m_s-1}(W\cap\ker L_s).
		\)
		Then
		\(
		W=\bigoplus_{j=0}^{m_s-1}e_s^jC_{s,W}.
		\)
		Equivalently, $W$ is free as a module over
		$\K[e_s]/(e_s^{m_s})$, with $C_{s,W}$ as multiplicity space.
		Moreover,
		\[
		\mathcal H_W(z)=(m_s)_z\,\mathcal H_{C_{s,W}}(z),
		\qquad
		m_s\mid\dim W.
		\]
	\end{corollary}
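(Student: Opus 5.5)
The plan is to apply Theorem~\ref{thm:truncated-q-taylor} directly to $W$, regarded as a module over $\mathsf W_{q_s}$ with $L$ acting as $L_s$ and $D$ as $D_s$. Proposition~\ref{pro:local-q-weyl} provides everything needed for this. The relation $D_sL_s-q_sL_sD_s=\id_{\frB}$ holds on all of $\frB$, so it holds on $W$, since $W$ is stable under both operators. Moreover $L_s^{m_s}=0$ on $\frB$, hence on $W$, and $[j]_{q_s}\neq0$ for $1\le j<m_s$. Finally, $m_s\ge2$, because $e_s\neq0$ in $\frB$.

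One degenerate case must be handled first. If $W=0$, every assertion is trivial: $C_{s,W}=0$, the Hilbert series are zero, and $m_s\mid 0$. So assume $W\neq0$.

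The key identification is $W\cap\ker L_s=\ker(L_s|_W)$. With this, the subspace $C$ produced by Theorem~\ref{thm:truncated-q-taylor} is exactly
\[
D_s^{m_s-1}(\ker L_s|_W)=C_{s,W}.
\]
The theorem then gives $W=\bigoplus_{j=0}^{m_s-1}L_s^jC_{s,W}=\bigoplus_j e_s^jC_{s,W}$. It also gives freeness of $W$ over $A_{m_s}$, where $t$ acts by $L_s$. By Proposition~\ref{pro:local-q-weyl}, $A_{m_s}$ is identified with the subalgebra $\K[e_s]/(e_s^{m_s})$ of $\frB$, acting by left multiplication.

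For the grading statement, I will check that $L_s$ is homogeneous of degree $+1$ and $D_s$ of degree $-1$ on $\frB$. For $L_s$ this holds because $e_s\in\frB(1)$. For $D_s$ it follows from the definition through $\Delta_{1,d-1}$, with $D_s(1)=0$ supplying the degree-$0$ case. Each $W(d)\subseteq\frB(d)$ is finite-dimensional, since $\frB$ is finite-dimensional. The graded part of Theorem~\ref{thm:truncated-q-taylor} therefore yields $\mathcal H_W(z)=(m_s)_z\mathcal H_{C_{s,W}}(z)$. Since $W$ is finite-dimensional, it also yields $m_s\mid\dim W$.

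I expect no serious obstacle. The only points needing care are the degree bookkeeping for $D_s$ and the remark that stability of $W$ under $L_s$ and $D_s$ is what makes $W$ a $\mathsf W_{q_s}$-submodule of $\frB$.
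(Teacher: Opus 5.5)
Your proposal is correct and follows the paper's proof: the paper restricts $L_s$ and $D_s$ to $W$ and invokes Theorem~\ref{thm:truncated-q-taylor}. Your additional checks make explicit the hypotheses the paper leaves implicit, namely the trivial case $W=0$, the identification $W\cap\ker L_s=\ker(L_s|_W)$, the bound $m_s\ge2$, and the degrees $\pm1$ of $L_s$ and $D_s$.
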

	
	\begin{proof}
		Restrict $L_s$ and $D_s$ to $W$ and apply
		Theorem~\ref{thm:truncated-q-taylor}.
	\end{proof}
	
	For an arbitrary subset $Y\subseteq X\setminus\{s\}$, put
	\[
	K_Y=\bigcap_{x\in Y}\ker\partial_x,
	\]
	with the convention $K_\varnothing=\frB$. Related kernel-stability properties occur in Lochmann's modified-shift construction
	\cite[Proposition~9(3)]{Lochmann}. Applying the truncated $q$-Weyl
	factorization to these kernel intersections gives the following.
	
	\begin{corollary}[Kernel factorization]
		\label{cor:kernel-q-weyl-factorization}
		For every $Y\subseteq X\setminus\{s\}$,
		\[
		K_Y=\bigoplus_{j=0}^{m_s-1}e_s^jC_{s,Y},
		\qquad
		C_{s,Y}=D_s^{m_s-1}(K_Y\cap\ker L_s).
		\]
		Consequently,
		\[
		\mathcal H_{K_Y}(z)=(m_s)_z\,\mathcal H_{C_{s,Y}}(z).
		\]
	\end{corollary}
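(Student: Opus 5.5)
The plan is to deduce this from Corollary~\ref{cor:stable-q-weyl-factorization} with $W=K_Y$. That corollary needs only that $K_Y$ is a graded subspace stable under both $L_s$ and $D_s$. Once this is checked, it gives
\[
K_Y=\bigoplus_{j=0}^{m_s-1}e_s^jC_{s,K_Y},\qquad C_{s,K_Y}=D_s^{m_s-1}(K_Y\cap\ker L_s),
\]
which is exactly $C_{s,Y}$. The Hilbert series identity then follows from the graded part of Theorem~\ref{thm:truncated-q-taylor}, since $\frB$ is finite-dimensional and graded, $L_s$ has degree $1$, and $D_s$ has degree $-1$.

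Gradedness is immediate. Each $\partial_x$ is homogeneous of degree $-1$, so each $\ker\partial_x$ is a graded subspace, and so is any intersection of such kernels. In the case $Y=\varnothing$ we have $K_\varnothing=\frB$, which is trivially graded and stable under both operators.

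Stability under $D_s$ follows from the commutation rule in Lemma~\ref{lem:derivative-identities-prelim}. For $x\in Y$ we have $\partial_x D_s=\partial_x\partial_s^{\mathrm{op}}=\partial_s^{\mathrm{op}}\partial_x$. Hence if $\partial_x v=0$, then $\partial_x(D_sv)=D_s(\partial_xv)=0$, so $D_s K_Y\subseteq K_Y$.

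Stability under $L_s$ is the step needing care. For $x\in Y$ we have $x\neq s$, because $Y\subseteq X\setminus\{s\}$. The lemma states explicitly that $\partial_x(e_sv)=e_s\partial_x(v)$ when $x\neq s$. This comes from the product rule $\partial_x(uv)=u\partial_x(v)+\partial_x(u)(x\cdot v)$ with $u=e_s$, using $\partial_x(e_s)=\delta_{x,s}=0$. So $\partial_x v=0$ implies $\partial_x(e_sv)=0$, and $L_sK_Y\subseteq K_Y$.

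There is no genuine obstacle here. The only point to watch is that the hypothesis $s\notin Y$ is exactly what makes the product-rule cross term vanish. If $s$ were in $Y$, $K_Y$ would not be $L_s$-stable, since $\partial_s(e_s)=1$.

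Assembling these three facts, Corollary~\ref{cor:stable-q-weyl-factorization} yields the direct sum decomposition of $K_Y$ and the identity $\mathcal H_{K_Y}(z)=(m_s)_z\,\mathcal H_{C_{s,Y}}(z)$.
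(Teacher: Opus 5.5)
Your proposal is correct and matches the paper's proof: you show that $K_Y$ is stable under $L_s$ (the product rule with $\partial_x(e_s)=0$ for $x\neq s$) and under $D_s$ (the commutation $\partial_x\partial_s^{\mathrm{op}}=\partial_s^{\mathrm{op}}\partial_x$), and then invoke Corollary~\ref{cor:stable-q-weyl-factorization}. Your explicit check that $K_Y$ is graded is a detail the paper leaves implicit.
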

	
	\begin{proof}
		For $x\neq s$, the braided-derivative identities give
		\[
		\partial_x(e_sv)=e_s\partial_x(v),
		\qquad
		\partial_xD_s=D_s\partial_x.
		\]
		Hence $K_Y$ is stable under both $L_s$ and $D_s$, and
		Corollary~\ref{cor:stable-q-weyl-factorization} applies.
	\end{proof}
	
	We now combine the kernel factorization with Gra\~na's freeness
	theorem. In the common-order setting, the resulting Hilbert-series
	divisibility recovers \cite[Theorem~5]{Lochmann}. The point here is
	that the truncated $q$-Weyl argument yields an explicit three-factor
	decomposition, with the factor $(m_s)_z$ determined locally by the
	chosen element $s\in X\setminus X'$.
	
	\begin{theorem}[Local Hilbert factorization]
		\label{thm:local-height-divisibility}
		Let $X'\subsetneq X$ be a nonempty subquandle, set
		$V'=\bigoplus_{x\in X'}\K e_x$ and $\frB'=\BN(V')$, and choose
		$s\in X\setminus X'$. Then there exists a graded vector space
		$C_{s,X'}$ such that
		\begin{equation}
			\label{eq:three-factor-local}
			\frB\simeq
			\K[e_s]/(e_s^{m_s})\otimes C_{s,X'}\otimes\frB'
		\end{equation}
		as graded vector spaces. One may take
		\[
		C_{s,X'}
		=
		D_s^{m_s-1}(K_{X'}\cap\ker L_s),
		\qquad
		K_{X'}=\bigcap_{x\in X'}\ker\partial_x.
		\]
		Consequently,
		\begin{equation}
			\label{eq:local-exact-Hilbert}
			\mathcal H_{\frB}(z)
			=
			(m_s)_z\,
			\mathcal H_{C_{s,X'}}(z)\,
			\mathcal H_{\frB'}(z).
		\end{equation}
		In particular,
		\[
		(m_s)_z\,\mathcal H_{\frB'}(z)
		\mid
		\mathcal H_{\frB}(z)
		\quad\text{in }\Z[z],
		\qquad
		m_s\dim\frB'\mid\dim\frB.
		\]
	\end{theorem}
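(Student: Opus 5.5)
The plan is to combine Gra\~na's freeness theorem (Theorem~\ref{thm:grana-freeness-prelim}) with the kernel factorization of Corollary~\ref{cor:kernel-q-weyl-factorization}, taking $Y=X'$.

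\textbf{Step 1: Gra\~na's factorization.} Since $X'$ is a nonempty subquandle and $\frB$ is finite-dimensional, Theorem~\ref{thm:grana-freeness-prelim} gives a graded isomorphism
\[
K_{X'}\otimes\frB'\xrightarrow{\sim}\frB,\qquad u\otimes v\mapsto uv .
\]
Hence $\mathcal H_{\frB}(z)=\mathcal H_{K_{X'}}(z)\,\mathcal H_{\frB'}(z)$.

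\textbf{Step 2: factor $K_{X'}$.} Because $s\notin X'$, we have $X'\subseteq X\setminus\{s\}$. So Corollary~\ref{cor:kernel-q-weyl-factorization} applies with $Y=X'$ and gives
\[
K_{X'}=\bigoplus_{j=0}^{m_s-1}e_s^jC_{s,X'},\qquad C_{s,X'}=D_s^{m_s-1}(K_{X'}\cap\ker L_s).
\]
The underlying stability is the key point, and it is already settled by that corollary. For $x\in X'$ we have $x\neq s$, so $\partial_x(e_sv)=e_s\partial_x(v)$ and $\partial_xD_s=D_s\partial_x$; thus $K_{X'}$ is stable under $L_s$ and $D_s$. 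By Theorem~\ref{thm:truncated-q-taylor}, each $L_s^j=e_s^j\cdot$ with $0\le j<m_s$ is injective on $C_{s,X'}$ and raises degree by $j$. This yields a graded isomorphism
\[
\K[e_s]/(e_s^{m_s})\otimes C_{s,X'}\cong K_{X'},\qquad e_s^j\otimes c\mapsto e_s^jc,
\]
where $\K[e_s]/(e_s^{m_s})$ is identified with the subalgebra generated by $e_s$ via Proposition~\ref{pro:local-q-weyl}.

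\textbf{Step 3: compose.} Composing the isomorphisms of Steps 1 and 2 gives \eqref{eq:three-factor-local}, concretely $e_s^j\otimes c\otimes v\mapsto e_s^jcv$. The Hilbert series then multiply, which gives \eqref{eq:local-exact-Hilbert}.

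\textbf{Step 4: divisibility.} Here $\mathcal H_{C_{s,X'}}(z)\in\Z[z]$, since its coefficients are dimensions and $C_{s,X'}$ is finite-dimensional. Therefore $(m_s)_z\mathcal H_{\frB'}(z)$ divides $\mathcal H_{\frB}(z)$ in $\Z[z]$. Evaluating at $z=1$ gives $\dim\frB=m_s\dim C_{s,X'}\dim\frB'$, so $m_s\dim\frB'\mid\dim\frB$.

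The only potential obstacle is checking that the hypotheses line up. Gra\~na's theorem needs $X'$ to be a subquandle, and Corollary~\ref{cor:kernel-q-weyl-factorization} needs $s\notin X'$ so that the derivatives $\partial_x$ commute past $L_s$ and $D_s$. Both hold by the assumptions $X'\subsetneq X$ nonempty subquandle and $s\in X\setminus X'$, so the remaining steps are bookkeeping.
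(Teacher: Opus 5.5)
Your proposal is correct and follows the paper's proof essentially verbatim. Gra\~na's freeness theorem gives $\frB\simeq K_{X'}\otimes\frB'$, and Corollary~\ref{cor:kernel-q-weyl-factorization} with $Y=X'$ (valid because $s\notin X'$) splits $K_{X'}=\bigoplus_{j=0}^{m_s-1}e_s^jC_{s,X'}$; the explicit composite $e_s^j\otimes c\otimes v\mapsto e_s^jcv$ and the evaluation at $z=1$ only spell out bookkeeping that the paper leaves implicit.
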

	
	\begin{proof}
		By Gra\~na's freeness theorem,
		\(
		\frB\simeq K_{X'}\otimes\frB'
		\)
		as graded vector spaces. Since $s\notin X'$,
		Corollary~\ref{cor:kernel-q-weyl-factorization} gives
		\(
		K_{X'}
		=
		\bigoplus_{j=0}^{m_s-1}e_s^jC_{s,X'}.
		\)
		Combining the two decompositions gives
		\eqref{eq:three-factor-local} and
		\eqref{eq:local-exact-Hilbert}.
	\end{proof}
	
	Several consequences will be useful below. In the common-order
	indecomposable setting, the first one recovers
	\cite[Corollary~2]{Lochmann}; the formulation below allows the two
	local heights $m_s$ and $m_t$ to differ and does not require
	indecomposability.
	
	\begin{corollary}[Two local heights]
		\label{cor:two-local-heights}
		For distinct $s,t\in X$, there exists $Q_{s,t}(z)\in\Z_{\geq0}[z]$ such
		that $\mathcal H_{\frB}(z)=(m_s)_z(m_t)_zQ_{s,t}(z)$. In particular
		$m_sm_t\mid\dim\frB$.
	\end{corollary}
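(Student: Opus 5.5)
The plan is to run the argument of Theorem~\ref{thm:local-height-divisibility} with $K_{X'}$ replaced by the kernel $K_{\{t\}}=\ker\partial_t$ for the single point $t$. This avoids needing a subquandle containing $t$ but not $s$. The first step is to factor out $e_t$ itself. Apply Theorem~\ref{thm:truncated-q-taylor} directly to $W=\frB$ with $L=L_t$, $D=D_t$; this is legitimate by Proposition~\ref{pro:local-q-weyl}. It gives
\[
\frB=\bigoplus_{j=0}^{m_t-1}e_t^j C_t
\]
and $\mathcal H_\frB(z)=(m_t)_z\mathcal H_{C_t}(z)$. This, however, does not yet produce the factor $(m_s)_z$.

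To obtain both factors, I would instead use the one-point subquandle $X'=\{t\}$. It is a subquandle because $t\triangleright t=t$, it is nonempty, and it is proper since $s\ne t$. Moreover $s\in X\setminus X'$. Theorem~\ref{thm:local-height-divisibility} then gives
\[
\mathcal H_\frB(z)=(m_s)_z\,\mathcal H_{C_{s,\{t\}}}(z)\,\mathcal H_{\BN(\K e_t)}(z).
\]
It remains to identify $\BN(V')=\BN(\K e_t)$, where $V'=\K e_t$ with braiding $q_t$. I need to check that $\dim\BN(\K e_t)(n)=1$ for $n<m_t$ and $0$ otherwise, so that $\mathcal H_{\BN(\K e_t)}(z)=(m_t)_z$. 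The Nichols algebra of a one-dimensional braided space with scalar $q_t$ is $\K[x]/(x^N)$, where $N$ is the least $N$ with $[N]_{q_t}=0$. By Proposition~\ref{pro:local-q-weyl}, this least $N$ is exactly $m_t$, which agrees with the identification stated there.

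Set $Q_{s,t}(z)=\mathcal H_{C_{s,\{t\}}}(z)$. Its coefficients are dimensions, so $Q_{s,t}\in\Z_{\ge0}[z]$, and we get $\mathcal H_\frB=(m_s)_z(m_t)_zQ_{s,t}$. Evaluating at $z=1$ gives $m_sm_t\mid\dim\frB$.

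The main point to verify is that Gra\~na's theorem (Theorem~\ref{thm:grana-freeness-prelim}) applies to this one-point subquandle. Specifically, the subalgebra generated by $e_t$ inside $\frB$ must be the Nichols algebra $\BN(\K e_t)$, and $K_{\{t\}}\otimes\BN(\K e_t)\to\frB$ must be an isomorphism. This is covered by the theorem as stated for any nonempty subquandle, together with the last assertion of Proposition~\ref{pro:local-q-weyl}. Everything else is bookkeeping.
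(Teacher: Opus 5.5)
Your proposal is correct and is essentially the paper's proof. Like the paper, you apply Theorem~\ref{thm:local-height-divisibility} to the one-point subquandle $X'=\{t\}$, use Proposition~\ref{pro:local-q-weyl} to get $\mathcal H_{\BN(\K e_t)}(z)=(m_t)_z$, and take $Q_{s,t}=\mathcal H_{C_{s,\{t\}}}$. Your opening step, applying Theorem~\ref{thm:truncated-q-taylor} to all of $\frB$ with $(L_t,D_t)$, is harmless but not needed.
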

	
	\begin{proof}
		Apply Theorem~\ref{thm:local-height-divisibility} to the singleton
		subquandle $X'=\{t\}$. By Proposition~\ref{pro:local-q-weyl},
		$\BN(\K e_t)\simeq\K[e_t]/(e_t^{m_t})$, so
		$\mathcal H_{\BN(\K e_t)}(z)=(m_t)_z$.
	\end{proof}
	
	\begin{corollary}[A modular local obstruction]
		\label{cor:modular-local-obstruction}
		Suppose $\Char\K=p>0$. Let $X'\subsetneq X$ be a nonempty subquandle
		and suppose $s\in X\setminus X'$ satisfies $q_{s,s}=1$. Then
		$(p)_z\,\mathcal H_{\BN(V')}(z)\mid\mathcal H_{\frB}(z)$, and hence
		$p\,\dim\BN(V')\mid\dim\frB$.
	\end{corollary}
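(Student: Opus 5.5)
The plan is to specialize Theorem~\ref{thm:local-height-divisibility} to the case $q_{s,s}=1$ in characteristic $p$. The only new ingredient is the identification $m_s=p$; everything else is already contained in the local Hilbert factorization.

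First, determine $m_s$. By Proposition~\ref{pro:local-q-weyl}, $m_s$ is the least positive integer with $[m_s]_{q_s}=0$. Since $q_s=q_{s,s}=1$, we have $[j]_{q_s}=[j]_1=j\cdot 1_\K$. This vanishes in $\K$ exactly when $p\mid j$, so the least such $j$ is $p$, and hence $m_s=p$. Equivalently, by Remark~\ref{rmk-1-1-1-1-1} the subalgebra generated by $e_s$ is $\K[e_s]/(e_s^p)$, which matches the identification with $\BN(\K e_s)$ in Proposition~\ref{pro:local-q-weyl}. For this step we need $\frB$ to be finite-dimensional, which is part of the standing assumptions of this section, so that $m_s$ is defined.

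Second, apply Theorem~\ref{thm:local-height-divisibility} to the given nonempty subquandle $X'\subsetneq X$ and the chosen $s\in X\setminus X'$. This yields
\[
\mathcal H_{\frB}(z)=(p)_z\,\mathcal H_{C_{s,X'}}(z)\,\mathcal H_{\BN(V')}(z).
\]
Since $\mathcal H_{C_{s,X'}}(z)$ has nonnegative integer coefficients, this is a divisibility statement in $\Z[z]$. Evaluating at $z=1$ gives $\dim\frB=p\,\dim C_{s,X'}\,\dim\BN(V')$, and so $p\dim\BN(V')\mid\dim\frB$.

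I see no real obstacle. The only point that needs care is checking that the first vanishing of $[j]_1$ occurs exactly at $j=p$ and not earlier, and that is immediate from $\Char\K=p$.
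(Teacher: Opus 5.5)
Your proposal is correct and follows the paper's proof: you identify $m_s=p$ from Proposition~\ref{pro:local-q-weyl} using $[j]_1=j\cdot 1_\K$, then specialize Theorem~\ref{thm:local-height-divisibility}. Evaluating the Hilbert-series factorization at $z=1$ to get the dimension statement is also how the paper's theorem yields its numerical consequence.
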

	
	\begin{proof}
		By Proposition~\ref{pro:local-q-weyl}, $m_s$ is the first positive
		integer for which $[m_s]_{q_s}=0$. If $q_s=1$, then $[j]_1=j$ in $\K$,
		so $m_s=p$. The result follows from
		Theorem~\ref{thm:local-height-divisibility}.
	\end{proof}
	
	\begin{corollary}[Two diagonal-one generators]
		\label{cor:modular-two-generators}
		Suppose $\Char\K=p>0$. If there exist distinct $s,t\in X$ with
		$q_{s,s}=q_{t,t}=1$, then $(p)_z^2\mid\mathcal H_{\frB}(z)$ and
		$p^2\mid\dim\frB$.
	\end{corollary}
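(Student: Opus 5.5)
This is a direct specialization of Corollary~\ref{cor:two-local-heights}. We have distinct $s,t\in X$ with $q_{s,s}=q_{t,t}=1$, and Corollary~\ref{cor:two-local-heights} gives
\[
\mathcal H_{\frB}(z)=(m_s)_z(m_t)_zQ_{s,t}(z),\qquad Q_{s,t}(z)\in\Z_{\ge0}[z].
\]
So the only work is to show that $m_s=m_t=p$.

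To compute $m_s$, I use Proposition~\ref{pro:local-q-weyl}: $m_s$ is the first positive integer with $[m_s]_{q_s}=0$. When $q_s=1$ we have $[j]_1=j\cdot 1_\K$, and this first vanishes at $j=p$ because $\Char\K=p$. This is exactly the computation already carried out in the proof of Corollary~\ref{cor:modular-local-obstruction}. Hence $m_s=p$, and in the same way $m_t=p$.

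Substituting, $\mathcal H_{\frB}(z)=(p)_z^2\,Q_{s,t}(z)$, so $(p)_z^2\mid\mathcal H_{\frB}(z)$ in $\Z[z]$. Evaluating at $z=1$ gives $\dim\frB=p^2\,Q_{s,t}(1)$, and $Q_{s,t}(1)$ is a nonnegative integer, so $p^2\mid\dim\frB$.

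Alternatively, one can apply Corollary~\ref{cor:modular-local-obstruction} with the singleton subquandle $X'=\{t\}$. Since $t\triangleright t=t$, this is indeed a subquandle, and it is proper because $s\notin X'$. By Proposition~\ref{pro:local-q-weyl}, $\mathcal H_{\BN(\K e_t)}(z)=(m_t)_z=(p)_z$, which gives the same conclusion.

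There is no real obstacle here. The only thing to check is that the standing hypotheses hold: $\frB$ is finite-dimensional and $V$ is of quandle type, which is the running assumption of this section. The rest is bookkeeping with the earlier results.
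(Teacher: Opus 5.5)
Your proof is correct and matches the paper's argument: you use Proposition~\ref{pro:local-q-weyl} to get $m_s=m_t=p$ from $q_{s,s}=q_{t,t}=1$, then apply Corollary~\ref{cor:two-local-heights} and evaluate at $z=1$. Your alternative via Corollary~\ref{cor:modular-local-obstruction} with $X'=\{t\}$ is the same argument one level lower, since Corollary~\ref{cor:two-local-heights} is itself proved by applying Theorem~\ref{thm:local-height-divisibility} to that singleton subquandle.
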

	
	\begin{proof}
		By Proposition~\ref{pro:local-q-weyl}, $m_s=m_t=p$. Now apply
		Corollary~\ref{cor:two-local-heights}.
	\end{proof}
	
	\begin{remark}[The Weyl trace obstruction]
		\label{rmk:weyl-trace}
		For the dimension divisibility needed in the square-free application,
		the full graded factorization is stronger than necessary. Indeed,
		suppose $\Char\K=p>0$ and $q_{s,s}=1$. On every finite-dimensional
		subspace $W\subseteq\frB$ stable under $L_s$ and $D_s$,
		Proposition~\ref{pro:local-q-weyl} gives
		\[
		[D_s,L_s]=\id_W.
		\]
		The classical Weyl-algebra trace obstruction then gives
		\[
		0=\tr(D_sL_s-L_sD_s)=\tr(\id_W)=\dim W
		\qquad\text{in }\K,
		\]
		and therefore $p\mid\dim W$. Thus the dimension assertion in
		Corollary~\ref{cor:modular-local-obstruction} also follows directly:
		applying the trace obstruction to the Gra\~na kernel $K_{X'}$ gives
		$p\mid\dim K_{X'}$, while Gra\~na's freeness theorem gives
		\(
		\dim\frB=\dim K_{X'}\,\dim\BN(V').
		\)
		Hence $p\dim\BN(V')\mid\dim\frB$.
	\end{remark}

	\section{The square-free reduction}
	\label{sec:reduction}
	
	Throughout this section, $\K$ is algebraically closed with  $\Char\K=p>0$ and  $H$ is a pointed
	Hopf algebra of square-free dimension. Write
	\[
	G=\G(H),\qquad
	\gr H=R\#\K[G],\qquad
	D=\dim R=\frac{\dim H}{|G|}.
	\]
	We show that, unless $H=\K[G]$, the first layer $R(1)$ is forced to be
	one-dimensional with trivial braiding; this will ultimately force
	$D=p$. The argument proceeds by successively restricting the possible
	structure of $R(1)$.
	
	We begin with a well-known divisibility observation.
	
	\begin{lemma}
		\label{lem:nichols-divides-diagram}
		Let $U\subseteq R(1)$ be a Yetter--Drinfeld submodule. Then
		\[
		\dim\BN(U)\mid D.
		\]
	\end{lemma}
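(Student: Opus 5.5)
The plan is to realize $\BN(U)$ as a braided Hopf subalgebra of $R$, and then use freeness of $R$ over such a subalgebra to get divisibility of dimensions.

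\emph{Step 1: the subalgebra generated by $R(1)$ is a Nichols algebra.} Since $R$ is coradically graded, we have $\Pp(R)=R(1)$. Let $R'\subseteq R$ be the subalgebra generated by $R(1)$. It is a graded braided Hopf subalgebra of $R$ whose primitive elements are concentrated in degree one. Its graded dual is generated in degree one, so $R'\cong\BN(R(1))$. This is the standard argument of \cite{AS02}.

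\emph{Step 2: pass to $U$.} For a Yetter--Drinfeld submodule $U\subseteq R(1)$, the inclusion $U\hookrightarrow R(1)$ induces a map $\BN(U)\to\BN(R(1))$. This map is injective, because $\BN(U)$ is computed from the restricted braiding and the quantum symmetrizers on $U^{\otimes n}$ are restrictions of those on $R(1)^{\otimes n}$. Hence $\BN(U)$ is a braided Hopf subalgebra of $R$ in $\YD{G}{G}$.

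\emph{Step 3: bosonize and apply freeness.} Bosonizing, $B:=\BN(U)\#\K[G]$ is a Hopf subalgebra of the finite-dimensional Hopf algebra $A:=R\#\K[G]\cong\gr H$. By the Nichols--Zoeller theorem, $A$ is free over $B$, so
\[
\dim B=|G|\dim\BN(U)\quad\text{divides}\quad \dim A=|G|\,D .
\]
Cancelling $|G|$ gives $\dim\BN(U)\mid D$. Alternatively, one can apply the braided version of Nichols--Zoeller, that a finite-dimensional braided Hopf algebra is free over a braided Hopf subalgebra in $\YD{G}{G}$, directly to $\BN(U)\subseteq R$.

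\emph{Expected obstacle.} The main point to check is the injectivity $\BN(U)\hookrightarrow R$. This holds because Nichols algebras are functorial for inclusions of braided subspaces, via the description $\BN(V)=\bigoplus_n V^{\otimes n}/\ker\mathfrak S_n$. Everything else is the standard Nichols--Zoeller argument, which is valid in any characteristic.
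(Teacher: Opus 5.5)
Your proof is correct, but it takes a different route from the paper's. The paper argues in two steps. First, Gra\~na's freeness theorem gives $\dim\BN(U)\mid\dim\BN(R(1))$. Then $\dim\BN(R(1))\mid D$ is asserted without proof; the standard justification of that second step is exactly your Step~3 with $U=R(1)$. You instead realize $\BN(U)$ directly inside $R$ and apply Nichols--Zoeller once to $\BN(U)\#\K[G]\subseteq R\#\K[G]$, so Gra\~na's theorem is never needed. Your Step~1 could even be skipped: the subalgebra of $R$ generated by $U$ is generated in degree one, and its primitive elements lie in $\Pp(R)=R(1)$, hence in its degree-one part $U$, so it is already $\BN(U)$.

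Your route has the advantage of needing no complement of $U$ in $R(1)$. Applying Gra\~na's theorem means exhibiting a decomposition $R(1)=U\oplus C$ that satisfies its hypotheses, as the paper does explicitly in the proof of Lemma~\ref{lem:square-free-simple-summand}. For an arbitrary Yetter--Drinfeld submodule this requires some care when $p$ might divide $|G|$. That is exactly the situation in which the present lemma is first used, since $p\nmid|G|$ is only established afterwards.

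In exchange, the paper's route gives a graded statement: $\BN(R(1))\simeq K\otimes\BN(U)$ as graded spaces, for a graded kernel space $K$, and hence divisibility of Hilbert series. Nichols--Zoeller yields only ungraded freeness of the bosonizations. For this lemma only the dimension count is needed, so nothing is lost. Your closing appeal to a braided Nichols--Zoeller theorem is unnecessary, since cancelling $|G|$ already finishes the argument.
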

	
	\begin{proof}
		Set $V=R(1)$. Gra\~na's freeness theorem
		\cite[Theorem~3.8]{Grana} gives
		\[
		\dim\BN(U)\mid\dim\BN(V).
		\]
		Since $\dim\BN(V)\mid D$, the result follows.
	\end{proof}
	
	For the remainder of the reduction, assume $R\neq\K$. Since
	$\dim H=|G|D$ is square-free, both $D$ and $|G|$ are square-free and $(D,|G|)=1.$

	\begin{lemma}
		\label{lem:square-free-simple-summand}
		Let $U=M(g,\rho)\subseteq R(1)$ be a simple Yetter--Drinfeld
		submodule, and write $\rho(g)=q\,\id$. Then $q=1$ and $	p\mid\dim\BN(U)\mid D.$
		
	\end{lemma}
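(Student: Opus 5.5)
The plan is to rerun the truncated $q$-Weyl argument of Section~\ref{sec:nilpotency} inside $\BN(U)$, using a single homogeneous vector of degree $g$. I will read off the nilpotency order of that vector from $q$, and then use the coprimality $(D,|G|)=1$ to force $q=1$.

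\textbf{Step 1: the $q$-Weyl pair in $\BN(U)$.} Fix $0\neq z\in U_g=M(g,\rho)_g$ and extend $z$ to a homogeneous basis of $U=\bigoplus_{h\in\mathcal O_g}U_h$. Let $D_z$ be the opposite braided derivative with respect to $z$: the coefficient of $z$ in the first tensor factor of $\Delta_{1,d-1}$.

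The computation in the proof of Proposition~\ref{pro:local-q-weyl} goes through verbatim. The left-degree-one component of $\Delta(zv)$ is $z\otimes v+\sum_b (g\cdot b)\otimes z\,\partial^{\mathrm{op}}_b(v)$, where $b$ runs over the chosen basis. There are two cases:
\begin{itemize}
\item If $b\in U_h$ with $h\neq g$, then $g\cdot b\in U_{ghg^{-1}}$ and $ghg^{-1}\neq g$, so these terms do not contribute to the coefficient of $z$.
\item On $U_g$, the element $g$ acts by the scalar $q$, so only $b=z$ contributes, with coefficient $q$.
\end{itemize}
Hence
\[
D_zL_z-qL_zD_z=\id_{\BN(U)}.
\]
This does not need $\dim\rho=1$, so no quandle-type hypothesis enters.

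\textbf{Step 2: the nilpotency order.} Since $\BN(U)$ is finite-dimensional (it divides $D$ by Lemma~\ref{lem:nichols-divides-diagram}), $z$ is nilpotent. Let $m=\nord(z)\ge2$. As in Proposition~\ref{pro:local-q-weyl}, the braided product rule gives $\partial_z(z^j)=[j]_q z^{j-1}$, with $\partial_z$ defined relative to the same basis. Together with $\bigcap\ker\partial=\K1$ from Lemma~\ref{lem:derivative-identities-prelim}, this shows $[j]_q\neq0$ for $1\le j<m$.

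Theorem~\ref{thm:truncated-q-taylor}, applied to $W=\BN(U)$, then gives $[m]_q=0$ and
\[
m\mid\dim\BN(U)\mid D .
\]
(Alternatively, for the dimension statement alone, the trace argument of Remark~\ref{rmk:weyl-trace} suffices once $q=1$.)

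\textbf{Step 3: coprimality forces $q=1$.} This is the main arithmetic point. Since $\rho(g)=q\,\id$ and $g^{|G|}=1$, $q$ is a root of unity whose order $N$ divides $\operatorname{ord}(g)$, and hence divides $|G|$. In characteristic $p$, $N$ is prime to $p$.

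Suppose $q\neq1$. Then $N>1$ and $[j]_q=(q^j-1)/(q-1)$ vanishes exactly when $N\mid j$. So $m=N$ is the first zero of $[\,\cdot\,]_q$. Thus $N$ divides both $|G|$ and $D$, contradicting $(D,|G|)=1$, since $N>1$.

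Hence $q=1$. Then $[j]_1=j$ in $\K$, so $m=p$, and Step 2 gives $p\mid\dim\BN(U)\mid D$.

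The only delicate point is the derivative identity in Step 1 when $\dim\rho>1$. I expect it to go through because $g$ stabilizes $U_g$, acting there by a scalar, and moves every other homogeneous component off $U_g$.
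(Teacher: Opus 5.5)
Your proof is correct, but it takes a different route from the paper's.

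\textbf{The paper's route.} The paper does not use the $q$-Weyl machinery for this lemma. It splits $U=\K x\oplus C$ with $C=W_g\oplus\bigoplus_{h\neq g}U_h$ and checks that both summands are subcomodules stable under $\langle g\rangle$. It then invokes Gra\~na's freeness theorem to get $\dim\BN(\K x)\mid\dim\BN(U)\mid D$. Finally it computes $\dim\BN(\K x)$ from the quantum symmetrizer $[n]_q!$ on the braided line: this is $\ord(q)$ if $q\neq1$, and $p$ if $q=1$.

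\textbf{Your route.} You stay inside $\BN(U)$.
\begin{enumerate}
\item You re-derive $D_zL_z-qL_zD_z=\id$ for an adapted homogeneous basis. Your check that $g$ acts on $U_g$ by the scalar $q$ and moves every $U_h$ ($h\neq g$) off $U_g$ is exactly what is needed when $\dim\rho>1$, where Proposition~\ref{pro:local-q-weyl} does not literally apply.
\item You identify $\nord(z)$ as the first zero of $[\cdot]_q$. This uses $\partial_z(z^j)=[j]_qz^{j-1}$ and $\bigcap\ker\partial=\K1$, both valid in any Nichols algebra.
\item You apply Theorem~\ref{thm:truncated-q-taylor} with $W=\BN(U)$.
\end{enumerate}
Your Step~3 arithmetic is then the same as the paper's.

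\textbf{Comparison.} The paper's version is shorter, given Gra\~na's theorem as a black box, and it never has to extend the local $q$-Weyl pair beyond one-dimensional homogeneous components. Yours avoids a second appeal to Gra\~na's theorem: Lemma~\ref{lem:nichols-divides-diagram} is used, exactly as in the paper, only for finiteness and for $\dim\BN(U)\mid D$. It also yields a sharper conclusion, namely that $\BN(U)$ is free over $\K[z]/(z^m)$, so that $(m)_z\mid\mathcal H_{\BN(U)}(z)$.
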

	
	\begin{proof}
		Choose $0\neq x\in U_g$ and a complement $U_g=\K x\oplus W_g$. Set
		\[
		L=\K x,\qquad
		C=W_g\oplus\bigoplus_{h\neq g}U_h,
		\]
		so that $U=L\oplus C$. Both $L$ and $C$ are $\K[G]$-subcomodules.
		The support of $L$ is $g$, hence generates $\langle g\rangle$, and both $L$ and
		$C$ are stable under this subgroup; indeed, $g\cdot x=qx$, $g$ acts
		on $U_g$ as the scalar $q$, and $g\cdot U_h=U_{ghg^{-1}}$.
		
		By Lemma~\ref{lem:nichols-divides-diagram}, $\BN(U)$ is
		finite-dimensional and $\dim\BN(U)\mid D$. Gra\~na's freeness
		theorem \cite[Theorem~3.8]{Grana} therefore applies to
		$U=L\oplus C$, and gives
		\[
		\dim\BN(L)\mid\dim\BN(U)\mid D.
		\]
		
		Since $c(x\otimes x)=q\,x\otimes x$, suppose $q\neq1$, and let
		$\ell=\ord(q)>1$. Since $q^{\ord(g)}=1$, we have
		$\ell\mid\ord(g)\mid|G|$. Also $p\nmid\ell$, since $\K^\times$ has
		no nontrivial $p$-power torsion. For the braided line $L$, the
		quantum symmetrizer in degree $n$ is multiplication by
		\[
		[n]_q!=\prod_{j=1}^n[j]_q.
		\]
		Since $[j]_q\neq0$ for $1\leq j<\ell$ and $[\ell]_q=0$,
		\[
		\BN(L)\cong\K[x]/(x^\ell),
		\qquad
		\dim\BN(L)=\ell.
		\]
		Thus $\ell\mid D$ and $\ell\mid|G|$, contradicting $(D,|G|)=1$.
		Hence $q=1$.
		
		By Remark~\ref{rmk-1-1-1-1-1}, $\BN(L)\cong\K[x]/(x^p)$. Therefore
		\[
		p=\dim\BN(L)\mid\dim\BN(U)\mid D,
		\]
		as claimed.
	\end{proof}
	
	\begin{lemma}
		\label{lem:square-free-R1-simple}
		$R(1)$ is a simple object in ${}_G^G\mathcal{YD}$.
	\end{lemma}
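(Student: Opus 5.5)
We will first show that $R(1)$ is semisimple and then that it cannot have two simple summands. We assume $R\neq\K$ throughout; since $R$ is coradically graded and connected, $R(1)\neq0$.

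\emph{Step 1: $p\nmid|G|$, so $R(1)$ is semisimple.} Since $R(1)\neq0$, it contains a simple Yetter--Drinfeld submodule $U=M(g,\rho)$. Lemma~\ref{lem:square-free-simple-summand} gives $p\mid D$. Because $(D,|G|)=1$, we get $p\nmid|G|$. By Maschke's theorem for the Drinfeld double $D(\K[G])$, which has dimension $|G|^2$ and is semisimple when $p\nmid|G|$, the category $\YD{G}{G}$ is semisimple. Hence
\[
R(1)=\bigoplus_{i=1}^r M(g_i,\rho_i).
\]

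\emph{Step 2: there is only one summand.} Suppose $r\geq2$. By Lemma~\ref{lem:nichols-divides-diagram}, $\BN(R(1))$ is finite-dimensional and $\dim\BN(R(1))\mid D$. Applying Lemma~\ref{lem:square-free-simple-summand} to each summand gives $q_i=\rho_i(g_i)=1$ for every $i$. Remark~\ref{rmk:grana-product-divisibility}, which applies because $p\nmid|G|$ and $\K$ is algebraically closed, then yields
\[
p^r\mid\dim\BN(R(1))\mid D.
\]
With $r\geq2$ this gives $p^2\mid D$, contradicting the square-freeness of $D$. Therefore $r=1$, and $R(1)$ is simple.

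\emph{Main obstacle.} The delicate point is checking that the hypotheses of Remark~\ref{rmk:grana-product-divisibility} actually hold: semisimplicity of $R(1)$, finite-dimensionality of $\BN(R(1))$, and the identification $N_i=p$ from $q_i=1$. Each of these comes from Steps 1 and 2 above. If invoking the remark turns out to be problematic, there is a fallback. Apply Gra\~na's freeness theorem to the decomposition $R(1)=L\oplus C$ with $L=\K z_1\oplus\K z_2$, where $z_1,z_2$ are chosen in different summands. This needs $L$ to be a subcomodule stable under the subgroup generated by its support, which is a genuine extra check when $g_1,g_2$ do not commute. If that holds, $\BN(L)$ has dimension divisible by $p^2$, since it contains the factors $\K[z_i]/(z_i^p)$ by the two-generator divisibility in Corollary~\ref{cor:modular-two-generators}. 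We would use this route only if the remark cannot be applied directly.
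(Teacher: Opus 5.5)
Your main argument is correct and is essentially the paper's proof. Like the paper, you use Lemma~\ref{lem:square-free-simple-summand} to get $p\mid D$, hence $p\nmid|G|$ and semisimplicity of $\YD{G}{G}$, and then Remark~\ref{rmk:grana-product-divisibility} with all $\rho_i(g_i)=\mathrm{id}$ to obtain $p^r\mid\dim\BN(R(1))\mid D$, which forces $r=1$. The fallback route is unnecessary and would not work as written: as you suspect, $\K z_1\oplus\K z_2$ need not be stable under $\langle g_1,g_2\rangle$ or be of quandle type, so Corollary~\ref{cor:modular-two-generators} would not apply directly.
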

	
	\begin{proof}
		By Lemma~\ref{lem:square-free-simple-summand}, any simple
		Yetter--Drinfeld submodule of $R(1)$ forces $p\mid D$. Since
		$(D,|G|)=1$, we have $p\nmid|G|$. By Maschke's theorem, ${}_G^G\mathcal{YD}$ is semisimple.
		
		Write
		\[
		R(1)=U_1\oplus\cdots\oplus U_r,
		\qquad
		U_i=M(g_i,\rho_i),
		\]
		with each $U_i$ simple. By
		Lemma~\ref{lem:square-free-simple-summand}, $\rho_i(g_i)=\id$.
		Choose $0\neq x_i\in(U_i)_{g_i}$. Then $\dim\BN(\K x_i)=p$.
		Moreover, Lemma~\ref{lem:nichols-divides-diagram} gives
		\[
		\dim\BN(R(1))\mid D.
		\]
		By Remark~\ref{rmk:grana-product-divisibility},
		\[
		p^r
		=\prod_{i=1}^r\dim\BN(\K x_i)
		\mid\dim\BN(R(1))
		\mid D.
		\]
		Since $D$ is square-free, necessarily $r=1$.
		Thus $R(1)$ is simple.
	\end{proof}
	
	\begin{lemma}
		\label{lem:square-free-simple-collapse}
		$\dim R(1)=1$, and the braiding on $R(1)$ is the ordinary
		flip.
	\end{lemma}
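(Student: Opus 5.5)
By Lemma~\ref{lem:square-free-R1-simple}, $R(1)=M(g,\rho)$ is simple. By Lemma~\ref{lem:square-free-simple-summand}, $\rho(g)=\id$, and by Lemma~\ref{lem:nichols-divides-diagram}, $\dim\BN(R(1))\mid D$. Since $p\mid D$ and $(D,|G|)=1$, we have $p\nmid|G|$. The plan is to show first that $\dim\rho=1$, then that $g\in Z(G)$. Together these give $\dim R(1)=|\mathcal O_g|\dim\rho=1$, and the braiding on $R(1)$ is then multiplication by $\rho(g)=1$, i.e.\ the flip.

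\emph{Step 1: $\dim\rho=1$.} Suppose $\dim\rho=k\geq2$. The component $U_g=M(g,\rho)_g$ is a $k$-dimensional Yetter--Drinfeld submodule over $\K[C_G(g)]$, homogeneous of degree $g$. By Proposition~\ref{prop:simple-YD-group} its braiding is $q\,\tau=\tau$ (the flip), so $U_g$ is of diagonal type with all entries $1$. Hence $\BN(U_g)$ is the truncated symmetric algebra $\K[x_1,\dots,x_k]/(x_i^p)$, of dimension $p^k$. Writing $U_g=\K x_1\oplus\K x_2\oplus W'$, both summands are $G$-subcomodules stable under $\langle g\rangle$, since $g$ acts trivially on $U_g$. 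Gra\~na's freeness theorem then applies to $R(1)=U_g\oplus C$, where $C=\bigoplus_{h\neq g}R(1)_h$ is stable under $\langle g\rangle$ because $g\cdot R(1)_h=R(1)_{ghg^{-1}}$ and $ghg^{-1}\neq g$ for $h\neq g$. This is the same splitting used in Lemma~\ref{lem:square-free-simple-summand}. It gives $p^k=\dim\BN(U_g)\mid\dim\BN(R(1))\mid D$, so $p^2\mid D$, contradicting square-freeness. If applying Gra\~na's theorem directly to $U_g$ is awkward, I would instead split off $L=\K x_1\oplus\K x_2$ in the same manner. Alternatively, I would invoke Remark~\ref{rmk:grana-product-divisibility}-type reasoning on the two lines $\K x_1,\K x_2$.

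\emph{Step 2: $g\in Z(G)$.} Now $\dim\rho=1$, so by \S\ref{subsec:quandles} the module $R(1)$ is of quandle type, supported on $X=\mathcal O_g$, with every $q_{x,x}=\rho(g)=1$. If $g\notin Z(G)$, then $|X|\geq2$, so there are distinct $s,t\in X$ with $q_{s,s}=q_{t,t}=1$. Corollary~\ref{cor:modular-two-generators} applies, since $\BN(R(1))$ is finite-dimensional because $\dim\BN(R(1))\mid D$. It gives $p^2\mid\dim\BN(R(1))\mid D$, again a contradiction. Hence $\mathcal O_g=\{g\}$, and therefore $\dim R(1)=1$ with $c(x\otimes x)=x\otimes x$.

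The main obstacle is Step 1. One must justify that the homogeneous component $U_g$ (or a two-dimensional piece of it) is a legitimate braided subspace to which Gra\~na's freeness theorem applies. That requires checking the subcomodule and stability hypotheses used in Lemma~\ref{lem:square-free-simple-summand}, and identifying $\BN$ of a trivially braided $k$-dimensional space in characteristic $p$ as having dimension $p^k$. I would first try to mimic the splitting $L\oplus C$ from Lemma~\ref{lem:square-free-simple-summand} with $L$ two-dimensional inside $U_g$.
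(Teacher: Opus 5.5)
Your proposal is correct and follows the paper's proof essentially step for step. You use the same Gra\~na splitting $R(1)=R(1)_g\oplus\bigoplus_{h\neq g}R(1)_h$ relative to $\langle g\rangle$ to get $p^{\dim\rho}\mid D$, hence $\dim\rho=1$, and then apply Corollary~\ref{cor:modular-two-generators} to two distinct points of $\mathcal O_g$ to force $g\in Z(G)$. (Your fallback via Remark~\ref{rmk:grana-product-divisibility} on the lines $\K x_1,\K x_2$ would not work as stated: that remark takes one line per simple Yetter--Drinfeld summand over $G$, and here both lines lie in the same summand. Your other fallback, splitting off $\K x_1\oplus\K x_2$ via Gra\~na, does work, but neither fallback is needed.)
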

	
	\begin{proof}
		By Lemma~\ref{lem:square-free-R1-simple}, write
		$R(1)=M(g,\rho)$ and $d=\dim\rho$.
		Lemma~\ref{lem:square-free-simple-summand} gives $\rho(g)=\id$.
		Let $W=R(1)_g$, so $\dim W=d$. For $u,v\in W$, we have
		$c(u\otimes v)=g\cdot v\otimes u=v\otimes u$, so $W$ has the
		ordinary flip braiding. If $x_1,\ldots,x_d$ is a basis of $W$, then
		\[
		\BN(W)\cong\K[x_1,\ldots,x_d]/(x_1^p,\ldots,x_d^p),
		\qquad
		\dim\BN(W)=p^d.
		\]
		The decomposition
		$R(1)=W\oplus\bigoplus_{h\neq g}R(1)_h$
		satisfies the hypotheses of Gra\~na's freeness theorem for
		$\langle g\rangle$. Therefore
		$p^d=\dim\BN(W)\mid\dim\BN(R(1))\mid D$.
		Since $D$ is square-free, $d=1$.
		
		Thus $R(1)=M(g,\chi)$ for a one-dimensional character
		$\chi\colon C_G(g)\to\K^\times$ with $\chi(g)=1$. Suppose that
		$g\notin Z(G)$. Then $|\mathcal O_g|>1$, and $R(1)$ is of quandle
		type with one-dimensional homogeneous components indexed by
		$\mathcal O_g$, with $q_{x,x}=1$ for all $x\in\mathcal O_g$. By
		Lemma~\ref{lem:nichols-divides-diagram},
		$\dim\BN(R(1))\mid D$. Choosing two distinct elements of
		$\mathcal O_g$ and applying
		Corollary~\ref{cor:modular-two-generators} gives
		$p^2\mid\dim\BN(R(1))\mid D$, contradicting the square-freeness
		of $D$. Hence $g\in Z(G)$, so $\mathcal O_g=\{g\}$. Since
		$\dim\chi=1$, we have $\dim R(1)=1$. If $v$ spans $R(1)$, then
		$c(v\otimes v)=\chi(g)v\otimes v=v\otimes v$.
	\end{proof}
	
	\begin{proposition}[Square-free reduction]
		\label{pro:square-free-reduction}
		If $R\neq\K$, then
		\[
		p\mid D,\qquad
		p\nmid|G|,\qquad
		\dim R(1)=1.
		\]
		Moreover, the braiding on $R(1)$ is the ordinary flip.
	\end{proposition}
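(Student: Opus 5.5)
The proposition collects the lemmas of this section, so the plan is to assemble them in order. The one point needing attention is that we must first know $R(1)\neq0$.

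\emph{Step 1: $R(1)\neq0$.} I would argue that $R\neq\K$ forces this. The diagram $R$ is connected and coradically graded, so $R(0)=\K$ and $\Pp(R)=R(1)$. A nonzero connected coalgebra strictly bigger than its coradical has nonzero primitives: the first term of the coradical filtration beyond $R_0=\K$ is $\K\oplus\Pp(R)$. Coradical gradedness then gives $R(1)=\Pp(R)\neq0$. Since $R(1)$ is a nonzero finite-dimensional object of $\YD{G}{G}$, it contains a simple Yetter--Drinfeld submodule $U=M(g,\rho)$.

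\emph{Step 2: the divisibility conditions.} Apply Lemma~\ref{lem:square-free-simple-summand} to $U$. This gives $\rho(g)=\id$ and $p\mid\dim\BN(U)\mid D$, so $p\mid D$. Because $\dim H=|G|D$ is square-free, we have $(D,|G|)=1$, and therefore $p\nmid|G|$.

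\emph{Step 3: dimension and braiding of $R(1)$.} With $p\nmid|G|$, Lemma~\ref{lem:square-free-R1-simple} shows that $R(1)$ is simple. Lemma~\ref{lem:square-free-simple-collapse} then gives $\dim R(1)=1$ and shows that the braiding on $R(1)$ is the ordinary flip.

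\emph{Main obstacle.} This is not in the proposition itself, whose substance lies in the earlier lemmas, but in Step 1: justifying $R\neq\K\Rightarrow R(1)\neq0$. I would cite the standard fact that a coradically graded coalgebra is generated in the sense that $R_1=R(0)\oplus R(1)$. I would also use the Taft--Wilson-type statement that a pointed coalgebra with $C_1=C_0$ equals $C_0$. Together these handle the case $R(1)=0$: then $R_1=R_0=\K$, hence $R=\K$, a contradiction.
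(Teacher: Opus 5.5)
Your proposal is correct and follows the paper's proof. You establish $R(1)\neq0$, apply Lemma~\ref{lem:square-free-simple-summand} to a simple submodule to get $p\mid D$, deduce $p\nmid|G|$ from $(D,|G|)=1$, and then invoke Lemmas~\ref{lem:square-free-R1-simple} and~\ref{lem:square-free-simple-collapse}. The only difference is cosmetic: in Step~1 you use $R_1=\K\oplus\Pp(R)$ together with $C_1=C_0\Rightarrow C=C_0$, whereas the paper notes that a nonzero homogeneous element of least positive degree is primitive; both arguments are valid.
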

	
	\begin{proof}
		Since $R$ is connected and coradically graded,
		$\Pp(R)=R(1)$. As $R\neq\K$, the least positive degree occurring
		in $R$ contains a nonzero primitive element, and hence
		$R(1)\neq0$.
		
		Choose a simple Yetter--Drinfeld submodule of $R(1)$.
		Lemma~\ref{lem:square-free-simple-summand} gives $p\mid D$.
		Since $(D,|G|)=1$, we have $p\nmid|G|$.
		Finally, Lemma~\ref{lem:square-free-simple-collapse} gives
		$\dim R(1)=1$ and shows that its braiding is the ordinary flip.
	\end{proof}
	
	\section{The structure theorem and classification}
	\label{sec:main}
	
	We now prove the structure theorem and complete the classification by
	combining the square-free reduction with the classification in the case
	$\dim R=p$.

	\begin{lemma}
		\label{lem:rank-one-p-power}
		Let $\K$ be algebraically closed with  $\Char\K=p>0$ and $R=\bigoplus_{n\geq0}R(n)$ a
		finite-dimensional connected coradically graded braided Hopf algebra.
		Assume that $\dim R(1)=1$ and that the braiding of $R(1)$ is trivial.
		Then $\dim R=p^a$ for some $a\geq1$.
	\end{lemma}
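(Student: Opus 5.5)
The plan is to pass to the graded dual. Consider
\[
S:=R^{*}=\bigoplus_{n\geq0}R(n)^{*}.
\]
Since $R$ is finite-dimensional, connected and graded, $S$ is again a finite-dimensional connected graded braided Hopf algebra in the same Yetter--Drinfeld category. I will use the standard duality from \cite{AS02}: $R$ is coradically graded if and only if its graded dual is generated as an algebra by its degree-one part. Hence $S$ is generated by $S(1)=R(1)^{*}$, which is one-dimensional. Choose a spanning vector $y$ of $S(1)$. Then $y$ is primitive, because it has degree one in a connected graded braided bialgebra. Its self-braiding $c(y\otimes y)=\lambda\, y\otimes y$ has $\lambda$ equal to the scalar by which $R(1)$ braids with itself, since the braiding of the dual is the transpose. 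Thus $\lambda=1$, and the braiding on $S(1)\otimes S(1)$ is the flip.

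Next I will identify the algebra structure. Since $S$ is generated by the single element $y$ and is graded with $\dim S(1)=1$, the map $\K[t]\to S$, $t\mapsto y$, is a surjective graded algebra map. Its kernel is a homogeneous ideal of $\K[t]$, hence of the form $(t^{n})$. Therefore
\[
S\cong\K[y]/(y^{n}),\qquad n=\dim S=\dim R.
\]
Because $R(1)\neq0$, we have $y\neq0$ and so $n\geq2$.

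The key step is to pin down $n$. Since the braiding is trivial on $y\otimes y$, it is also trivial on $y^{k}\otimes y^{l}$, by the braided compatibility of $c$ with multiplication (or simply because the $G$-action and coaction on these powers are the trivial ones inherited from $y$). So the coproduct of $S$ on powers of $y$ is the ordinary one:
\[
\Delta(y^{n})=\sum_{k=0}^{n}\binom{n}{k}\,y^{k}\otimes y^{n-k}.
\]
In $S$ we have $y^{n}=0$, while the elements $y^{k}\otimes y^{n-k}$ with $0<k<n$ are linearly independent in $S\otimes S$. Hence $\binom{n}{k}=0$ in $\K$ for all $0<k<n$. By Lucas' theorem (equivalently, the criterion that $(t^{n})$ is a Hopf ideal of $\K[t]$ with $t$ primitive exactly when $n$ is a $p$-power), this forces $n=p^{a}$. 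Since $n\geq2$, we get $a\geq1$, and therefore $\dim R=\dim S=p^{a}$.

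I expect the main obstacle to be justifying cleanly that the dual of a coradically graded $R$ is generated in degree one. I would cite the standard duality \cite[Lemma~5.5]{AS02} for this, adapted to braided Hopf algebras in $\YD{G}{G}$, where the argument is purely coalgebraic and carries over verbatim. A secondary point to check is that the trivial self-braiding of $R(1)$ dualizes to a trivial self-braiding of $S(1)$. This holds because $c_{S(1),S(1)}$ is the transpose of $c_{R(1),R(1)}$ under the canonical identifications.
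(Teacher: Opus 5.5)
Your proposal is correct and follows the paper's proof essentially step by step. Like the paper, you pass to the graded dual, use the Andruskiewitsch--Schneider duality to get $S\cong\K[y]/(y^n)$ with $y$ primitive and trivially self-braided, expand $0=\Delta(y^n)=\sum_k\binom nk y^k\otimes y^{n-k}$, and conclude $n=p^a$; the paper invokes Radford's $q$-binomial criterion at $q=1$ where you invoke Lucas.

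One harmless slip: your parenthetical claim that the $G$-action and coaction on powers of $y$ are trivial is false in general, since only the scalar $\chi(g)$ is $1$. You do not need it, because $c(y\otimes y)=y\otimes y$ alone makes $y\otimes 1$ and $1\otimes y$ commute in the braided tensor product.
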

	
	\begin{proof}
		Set $A=R^{\operatorname{gr},*}$. Since $R$ is connected and coradically
		graded, $\Pp(R)=R(1)$. By the graded-duality principle
		\cite[Lemma~2.4]{AS02}, the graded algebra $A$ is generated by $A(1)$.
		Since $\dim A(1)=1$, choosing $0\neq z\in A(1)$ gives
		$A\cong\K[z]/(z^N)$ for some $N\geq2$, and hence
		$N=\dim A=\dim R$.
		
		As $A$ is connected and graded, $z$ is primitive. 
		Moreover, triviality of the braiding on $R(1)$ implies
		$c_A(z\otimes z)=z\otimes z$. Thus $z\otimes1$ and $1\otimes z$
		commute in the braided tensor-product algebra, and therefore
		\[
		0=\Delta(z^N)=\sum_{j=0}^N\binom Nj z^j\otimes z^{N-j}.
		\]
		By the minimality of $N$, both factors in $z^j\otimes z^{N-j}$ are
		nonzero for $0<j<N$. Since these terms have distinct bidegrees, it
		follows that $\binom Nj=0$ in $\K$ for $0<j<N$. The $q$-binomial
		criterion of Radford \cite{Radford99}, specialized to
		$q=1$, now yields $N=p^a$ for some $a\geq0$. Since $N\geq2$,
		necessarily $a\geq1$. Thus $\dim R=p^a$.
	\end{proof}

	\begin{theorem}
		\label{thm:square-free-pointed}
		Let $\K$ be algebraically closed with  $\Char\K=p>0$ and  $H$  a pointed Hopf algebra
		of square-free dimension. Then either $H=\K[\G(H)]$
		or $\dim H=p|\G(H)|$.
		In particular, if $p\nmid\dim H$, then $H$ is a group algebra.
	\end{theorem}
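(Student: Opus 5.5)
The plan is to pass to the associated graded Hopf algebra and read off the dimension from the diagram. Since $H$ is finite-dimensional and pointed, we have $\gr H\cong R\#\K[G]$ with $G=\G(H)$, and by \eqref{eq:dim-factorization} $\dim H=|G|\,D$ with $D=\dim R$. There are then two cases.

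First suppose $R=\K$. Then $\dim H=|G|=\dim H_0$, so the coradical filtration stops at $H_0$ and $H=H_0=\K[G]$. Here we use that $\dim\gr H=\dim H$, so $H_0=H$. Thus $H$ is a group algebra.

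Now suppose $R\neq\K$. By Proposition~\ref{pro:square-free-reduction}, $R(1)$ is one-dimensional with the ordinary flip braiding, and $p\mid D$. The diagram $R$ is a finite-dimensional connected coradically graded braided Hopf algebra in $\YD{G}{G}$, so Lemma~\ref{lem:rank-one-p-power} applies and gives $D=p^a$ with $a\ge1$. Since $\dim H$ is square-free, so is $D$, which forces $a=1$. Hence $D=p$ and $\dim H=p|G|$.

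For the final assertion, suppose $p\nmid\dim H$. In the second case we would have $p\mid D\mid\dim H$, which is impossible. So we are in the first case and $H$ is a group algebra.

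All the substantive work is already contained in Proposition~\ref{pro:square-free-reduction} and Lemma~\ref{lem:rank-one-p-power}. The only step that needs a moment's care is identifying $R=\K$ with $H=\K[G]$, which comes down to the dimension count $\dim\gr H=\dim H$.
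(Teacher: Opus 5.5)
Your proposal is correct and follows the paper's proof essentially step for step. You split on whether $R=\K$, and in that case get $H=H_0=\K[G]$ from the dimension count. Otherwise you apply Proposition~\ref{pro:square-free-reduction}, then Lemma~\ref{lem:rank-one-p-power}, and use square-freeness of $D$ to force $D=p$; for the final assertion you use $p\mid D$ from the proposition instead of $D=p$, which comes to the same thing.
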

	
	\begin{proof}
		Put $G=\G(H)$, $\gr H=R\#\K[G]$, and $D=\dim R$.
		If $R=\K$, then $\dim H=|G|=\dim H_0$, so
		$H=H_0=\K[G]$.
		If $R\neq\K$, Proposition~\ref{pro:square-free-reduction} gives
		$\dim R(1)=1$ with trivial braiding.
		Lemma~\ref{lem:rank-one-p-power} yields $D=p^a$.
		Since $D>1$ and $D$ is square-free, $a=1$ and $D=p$.
		Therefore $\dim H/|\G(H)|=p$. This alternative makes
		$\dim H=p|G|$ divisible by $p$, so $p\nmid\dim H$
		forces $H=\K[G]$.
	\end{proof}
	
	The preceding theorem reduces the non-group case to pointed
	Hopf algebras whose diagram has dimension $p$. We now complete the
	proof of the classification stated in
	Theorem~\ref{thm:intro-classification}.
	
	\begin{proof}[The proof of Theorem~\ref{thm:intro-classification}]
		Let $G=\G(H)$ and $\gr H=R\#\K[G]$.
		Theorem~\ref{thm:square-free-pointed} gives either $R=\K$,
		in which case $H=\K[G]$, or $\dim R=p$.
		In the latter case $n=pm$, $|G|=m$, and $p\nmid|G|$,
		since $n$ is square-free.
		By \cite[Theorem~3.7]{Xiong2023}, $H$ is isomorphic to one of
		$\fA^1_G(g,\chi,f)$, $\fA^2_G(g,\chi,f)$, or
		$\fA^3_G(g,f)$.
		By \cite[Remark~3.2(3)]{Xiong2023}, one may choose the generator
		with $f=0$ because $p\nmid|G|$. The third family requires
		$p\mid\ord(g)$ by \cite[Definition~3.1]{Xiong2023}.
		Since $\ord(g)\mid|G|$ and $p\nmid|G|$, this family cannot occur.
		
		Proposition~\ref{pro:square-free-reduction} gives
		$\dim R(1)=1$ with ordinary flip braiding. Hence
		\[
		R(1)=M(g,\chi)
		\]
		for some $g\in Z(G)$ and
		$\chi\in\Hom(G,\K^\times)$ with $\chi(g)=1$.
		We thus denote the two remaining normalized families by
		$\fA^1_G(g,\chi)$ and $\fA^2_G(g,\chi)$.
		With $f=0$, \cite[Definition~3.1]{Xiong2023} gives the displayed
		presentations and, for $\fA^2$, exactly the additional conditions
		$g^{p-1}=1$ and $\chi^{p-1}=\varepsilon$.
		The two families are mutually non-isomorphic by
		\cite[Proposition~3.6]{Xiong2023}.
		
		Finally, an isomorphism within either family restricts on the
		group-like elements to a group isomorphism $F:G\to G'$.
		The induced map on the one-dimensional infinitesimal braidings
		preserves the coaction and action, and hence
		\[
		F(g)=g',
		\qquad
		\chi'=\chi\circ F^{-1}.
		\]
		
		Conversely, any group isomorphism $F:G\to G'$ satisfying these
		conditions extends to a Hopf algebra isomorphism by
		\[
		h\longmapsto F(h)\quad(h\in G),
		\qquad
		x\longmapsto x',
		\]
		as is immediate from the displayed relations and coproducts.
	\end{proof}

	\section*{Acknowledgments}
	
	This work was partially supported by the National Natural Science Foundation of China (Grant No.~12401041). The author would like to thank Dr. Huan Jia for reading an earlier draft of this paper and for his helpful comments and suggestions.

%
%
%
%
%

\end{document}